\newtheorem{theorem}{Theorem}[section]
\newtheorem{lemma}[theorem]{Lemma}
\newtheorem{proposition}[theorem]{Proposition}
\newtheorem{corollary}[theorem]{Corollary}
\newtheorem{maintheorem}{Theorem}
\theoremstyle{definition}
\newtheorem{definition}[theorem]{Definition}
\newtheorem{remark}[theorem]{Remark}
\newtheorem{example}[theorem]{Example}
\newcommand{\Q}{\mathbb{Q}}
\newcommand{\Qp}{\mathbb{Q}_p}
\newcommand{\Zp}{\mathbb{Z}_p}
\newcommand{\N}{\mathbb{N}}
\newcommand{\R}{\mathbb{R}}
\newcommand{\Z}{\mathbb{Z}}
\newcommand{\dd}{\mathrm{d}}
\newcommand{\e}{\mathrm{e}}
\newcommand{\g}{\mathfrak{g}}
\newcommand{\M}{\mathcal{M}}
\newcommand{\X}{\mathfrak{X}}
\newcommand{\sphere}{\mathrm{S}^2_p}
\newcommand{\Circle}{\mathrm{S}^1_p}
\newcommand{\ovcircle}{\overline{\mathrm{S}}^1_p}
\newcommand{\hatcircle}{\widehat{\mathrm{S}}^1_p}
\newcommand{\G}{\mathrm{G}_p}
\newcommand{\T}{\mathrm{T}}
\newcommand{\tr}{^{\mathrm{T}}}
\newcommand{\letnpos}{Let $n$ be a positive integer}
\newcommand{\letpprime}{Let $p$ be a prime number}
\renewcommand{\le}{\leqslant}
\renewcommand{\ge}{\geqslant}
\DeclareMathOperator{\ord}{ord}
\DeclareMathOperator{\GL}{GL}
\DeclareMathOperator{\Ad}{Ad}
\DeclareMathOperator{\ad}{ad}
\DeclareMathOperator{\im}{im}
\numberwithin{equation}{section}
\newenvironment{enumerate-roman}{\begin{enumerate}
		
	}{\end{enumerate}}
\newenvironment{enumerate-alph}{\begin{enumerate}
		
	}{\end{enumerate}}
\title{Group actions on $p$-adic symplectic manifolds}
\author[Luis Crespo, \'Alvaro Pelayo]{Luis Crespo\,\,\,\,\,\, \'Alvaro Pelayo}
\address{Luis Crespo,
	Departamento de Matem\'{a}ticas, Estad\'{i}stica y Computaci\'{o}n, Universidad de Cantabria, Av.~de Los Castros 48, 39005 Santander, Spain}
\email{luis.cresporuiz@unican.es}
\address{\'Alvaro Pelayo,
	Facultad de Ciencias Matem\'aticas,
	Universidad Complutense de Madrid, 28040 Madrid, Spain, and Real Academia de Ciencias Exactas, F\'isicas y Naturales, Madrid, Spain}
\email{alvpel01@ucm.es}
\begin{document}
	
\begin{abstract}
	\letpprime. We introduce symplectic actions of $p$-adic analytic Lie groups on $p$-adic symplectic manifolds. Then we show that any $p$-adic symplectic action $G\times(M,\omega)\to(M,\omega)$ has a momentum map $\mu:M\to\g^*$, and that a proper $p$-adic symplectic action is Hamiltonian if and only if every orbit is isotropic. We conclude by defining $p$-adic symplectic toric manifolds, by analogy with the real case.
\end{abstract}
	
\maketitle

\section{Introduction}\label{sec:intro}

Symplectic geometry originates in the study of planetary motions. It is a fundamental subject in itself but also because of its many connections to other subjects such as classical and quantum physics, microlocal analysis, representation theory, integrable systems, etc., see \cite{HofZeh,OrtRat,Pelayo-symplectic,PelVuN-symplectic}. On the other hand the $p$-adic numbers play an important role in modern physics, see for example \cite{BFOW,FreOls,FreWit,FGZ,GarLop,Volovich} for applications to string dynamics/string theory and \cite{HSSS,MarTed} for applications in eternal inflation in cosmology.

On the other hand, group actions help us describing the symmetries of the physical world, and their study in a main area within real symplectic geometry. In the present paper we introduce $p$-adic symplectic and (weakly) Hamiltonian actions of $p$-adic analytic Lie groups. The definitions of weakly Hamiltonian and Hamiltonian actions on $p$-adic analytic manifolds are analogous to the ones in the real case. Nonetheless, because the field of $p$-adic numbers is so different from its real counterpart, the statements one can prove about such $p$-adic actions can be, as we will see, very different from the standard statements in the real case. In fact, our first two main results, which we state next (Theorems \ref{thm:weak}, \ref{thm:hamiltonian}) do not have a real analog (they are in fact, false, see Remark \ref{rem:counterexample-real}). Our first main result is:

\begin{maintheorem}[Every $p$-adic symplectic action has a momentum map]\label{thm:weak}
	\letpprime. Let $G$ be a $p$-adic analytic Lie group with Lie algebra $\g$. Let $(M,\omega)$ be a paracompact $p$-adic analytic symplectic manifold and let $\psi:G\times M\to M$ be a $p$-adic analytic symplectic Lie group action on $(M,\omega)$. Then $\psi:G\times M\to M$ is weakly Hamiltonian, that is, there exists a $p$-adic analytic momentum map $\mu:M\to\g^*$ for $\psi$.
\end{maintheorem}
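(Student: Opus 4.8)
The plan is to construct the momentum map by mimicking the real-variable construction but exploiting a feature special to the $p$-adic setting: the existence of global antiderivatives for closed one-forms with no topological obstruction. Recall that a momentum map $\mu:M\to\g^*$ is a map whose components $\langle\mu,X\rangle$, for $X\in\g$, are primitives of the one-forms $\iota_{X_M}\omega$, where $X_M$ is the fundamental vector field generated by $X$. The first step is therefore to show that each $\iota_{X_M}\omega$ is a \emph{closed} one-form. This is the standard infinitesimal consequence of the action preserving $\omega$: since $\psi$ is symplectic, the Lie derivative $\mathcal{L}_{X_M}\omega$ vanishes, and Cartan's magic formula $\mathcal{L}_{X_M}\omega=\dd\,\iota_{X_M}\omega+\iota_{X_M}\dd\omega$ together with $\dd\omega=0$ gives $\dd(\iota_{X_M}\omega)=0$. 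I would want to check carefully that Cartan's formula and the Lie-derivative machinery are available verbatim in the $p$-adic analytic category; this should follow from the fact that these are purely local algebraic identities on differential forms.

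The second, and decisive, step is to produce for each closed one-form $\alpha_X:=\iota_{X_M}\omega$ a global primitive function $f_X\in C^\infty(M)$ (in the $p$-adic analytic sense) with $\dd f_X=\alpha_X$. In the real case this is exactly where the obstruction lives: a closed one-form is only locally exact, and the existence of a global primitive is governed by $H^1_{\mathrm{dR}}(M)$, which is why real symplectic actions need not be weakly Hamiltonian. The key $p$-adic phenomenon I would invoke is that the relevant first de Rham-type cohomology vanishes for paracompact $p$-adic analytic manifolds, so that every closed one-form is globally exact. This is presumably the content of an earlier structural result (or a standard fact about $p$-adic analytic manifolds being totally disconnected, so that they decompose into disjoint clopen charts on which Poincaré-type lemmas apply and can be patched without cocycle obstruction); the paracompactness hypothesis is what lets me assemble the local primitives into a single global one. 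I expect this vanishing-of-$H^1$ statement to be the main obstacle, both to locate precisely in the $p$-adic literature and to invoke with the correct hypotheses.

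The third step is to verify linearity in $X$: the assignment $X\mapsto f_X$ can be chosen $\Qp$-linear, so that defining $\langle\mu(m),X\rangle:=f_X(m)$ yields a well-defined map $\mu:M\to\g^*$. Linearity of $X\mapsto X_M$ and hence of $X\mapsto\alpha_X$ is immediate from the definition of the fundamental vector field; the only subtlety is that primitives are determined only up to locally constant functions, so I must fix the additive constants consistently. Choosing a basis $X_1,\dots,X_n$ of $\g$, selecting a primitive $f_{X_i}$ for each basis element, and extending by linearity removes this ambiguity and produces a genuine linear map $\g\to C^\infty(M)$, which is equivalent to the datum of $\mu$.

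Finally I would confirm that the resulting $\mu$ is $p$-adic analytic: since each $\alpha_{X_i}$ is analytic and primitives of analytic one-forms are analytic (antidifferentiation preserves $p$-adic analyticity, as it does termwise on convergent power series), each component $f_{X_i}$ is analytic, and a map into $\g^*\cong\Qp^n$ with analytic components is analytic. This establishes that $\mu$ is a momentum map for $\psi$ and hence that $\psi$ is weakly Hamiltonian. The whole argument hinges on the second step, so if the clean vanishing of $H^1$ is not available in the required generality, the fallback would be to work chart-by-chart on a clopen cover adapted to the totally disconnected topology and argue that the patching obstruction vanishes for $p$-adic rather than real reasons.
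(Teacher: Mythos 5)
Your proposal is correct and follows essentially the same route as the paper's proof: the paper likewise reduces to a basis of $\g$ (Lemma~\ref{lemma:coordinates}), decomposes the paracompact manifold into \emph{disjoint} clopen balls, and applies a power-series Poincar\'e-type lemma (Lemma~\ref{lemma:closed-to-exact}) on each ball, the disjointness making the patching trivial. This is precisely your ``fallback'' clopen-cover argument, stated directly rather than packaged as the vanishing of a de Rham $H^1$.
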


Our second main result gives a characterization of $p$-adic Hamiltonian actions in terms of the properties of their orbits if $G$ is Abelian (for example the torus) and the action is proper (which includes the case when $G$ is compact):

\begin{maintheorem}[Characterization of $p$-adic Hamiltonian actions]\label{thm:hamiltonian}
	\letpprime. Let $G$ be an Abelian $p$-adic analytic Lie group with Lie algebra $\g$. Let $(M,\omega)$ be a paracompact $p$-adic analytic symplectic manifold and let $\psi:G\times M\to M$ be a proper $p$-adic analytic symplectic Lie group action on $(M,\omega)$. For every $\xi\in\g$, let $\mathrm{X}_\psi(\xi)$ be the infinitesimal generator of the $G$-action in the direction of $\xi$. Then $\psi:G\times M\to M$ is Hamiltonian if and only if $\omega(\mathrm{X}_\psi(\xi),\mathrm{X}_\psi(\eta))=0$ for every $\xi,\eta\in\g,$ that is, if and only if every $G$-orbit is isotropic.
\end{maintheorem}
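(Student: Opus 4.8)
The plan is to use Theorem \ref{thm:weak} to reduce the problem to the question of upgrading an arbitrary momentum map to a $G$-invariant one. Indeed, the coadjoint action of an Abelian group on $\g^*$ is trivial, so $\psi$ is Hamiltonian precisely when it admits a $G$-invariant momentum map. Writing $\mu^\xi:=\langle\mu,\xi\rangle$, so that the defining property reads $\dd\mu^\xi=\iota_{\mathrm{X}_\psi(\xi)}\omega$, the whole argument hinges on the identity $\mathrm{X}_\psi(\eta)(\mu^\xi)=\dd\mu^\xi(\mathrm{X}_\psi(\eta))=\omega(\mathrm{X}_\psi(\xi),\mathrm{X}_\psi(\eta))$, valid for every momentum map $\mu$ and all $\xi,\eta\in\g$.

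The implication ($\Rightarrow$) is the easy one: if $\psi$ is Hamiltonian, choose a $G$-invariant $\mu$; then each $\mu^\xi$ is constant along orbits, so its derivative $\mathrm{X}_\psi(\eta)(\mu^\xi)$ along any infinitesimal generator vanishes, and the identity above yields $\omega(\mathrm{X}_\psi(\xi),\mathrm{X}_\psi(\eta))=0$. For the converse I would assume every orbit isotropic and read the same identity backwards: it shows that \emph{every} momentum map (one exists by Theorem \ref{thm:weak}) is infinitesimally invariant, $\mathrm{X}_\psi(\eta)(\mu^\xi)\equiv 0$. I then convert this into invariance under a compact open subgroup. Fixing $x$ and $\xi$, the map $t\mapsto\langle\mu(\psi_{\exp(t\xi)}(x)),\eta\rangle$ is $p$-adic analytic with identically vanishing derivative; since a one-variable $p$-adic analytic function with zero derivative is constant on each ball, $\mu$ is invariant under $\exp$ of a small neighbourhood of $0\in\g$, hence under the compact open subgroup $G_0\le G$ these exponentials generate. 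In particular the restriction of $\mu$ to each orbit is locally constant.

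The crux is to pass from $G_0$-invariance to full $G$-invariance, and this is where properness and total disconnectedness enter. Any two momentum maps differ by a locally constant map $M\to\g^*$, so the momentum maps form a torsor under the global sections of the constant sheaf $\underline{\g^*}$, and a Hamiltonian momentum map is a $G$-fixed point of this torsor. Using the local structure of proper actions — compact stabilizers and a slice-type local model near each orbit — I would first produce, on a neighbourhood of each orbit, a \emph{local} $G$-invariant momentum map: the established infinitesimal invariance, averaging over the compact stabilizer, and constancy along the orbit directions together solve $\dd f^\xi=\iota_{\mathrm{X}_\psi(\xi)}\omega$ invariantly. On overlaps two such local solutions differ by a $G$-invariant locally constant $\g^*$-valued function, i.e.\ by a section over $M/G$ of $\underline{\g^*}$; these differences assemble into a \v{C}ech $1$-cocycle whose class is the only obstruction to gluing a global invariant momentum map.

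The main obstacle, and the genuine content, is the vanishing of this obstruction. Here the $p$-adic hypotheses are decisive: properness makes $M/G$ Hausdorff, locally compact and paracompact, while total disconnectedness of $M$ forces $M/G$ to have covering dimension zero, so that $H^1(M/G,\underline{\g^*})=0$ and the cocycle is a coboundary. Correcting the local solutions by it yields a global $G$-invariant momentum map, proving $\psi$ Hamiltonian. This last cohomological vanishing is exactly what fails over $\R$, where $M/G$ can carry nontrivial topology and an isotropic symplectic action — such as a circle rotating one factor of a $2$-torus — need not admit any momentum map at all; this is the structural reason Theorem \ref{thm:hamiltonian} has no real analog.
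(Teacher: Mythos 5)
Your overall architecture parallels the paper's: the easy direction is the same computation, and you correctly identify the real content as upgrading the (automatic, by Theorem \ref{thm:weak}) infinitesimal and local invariance of a momentum map to full $G$-invariance, using properness and total disconnectedness. Your \v{C}ech formulation is in effect the paper's argument seen from above: the paper takes a cover of $M$ by \emph{pairwise disjoint} clopen sets $U_q$ (strict paracompactness) and propagates an invariant momentum map across it by transfinite induction, correcting by locally constant functions; the disjointness of the cover is exactly what makes your obstruction class vanish trivially. However, two steps of your local construction do not survive scrutiny in the $p$-adic category, and they are where the work lies.

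First, ``averaging over the compact stabilizer'' is not available: a compact $p$-adic analytic group carries no bounded $\Qp$-valued Haar measure (on $\Zp$ the invariant distribution assigns mass $p^{-n}$ to each ball of radius $p^{-n}$, and $|p^{-n}|_p\to\infty$), so one cannot integrate a $\g^*$-valued analytic function over $G_q$ to make it $G_q$-invariant. The paper replaces this by an elementary argument: for $g\in G_q$, the map $\psi(g,\cdot)$ preserves each $1$-form $\omega(\mathrm{X}_\psi(\xi),\cdot)=\dd\mu_\xi$ (here Abelianness is used), so $\mu_\xi\circ\psi(g,\cdot)-\mu_\xi$ is locally constant in $m$; it vanishes at the fixed point $q$, hence vanishes on the whole ball. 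Second, you invoke a slice-type local model near each orbit of a proper action; no slice theorem for proper $p$-adic analytic actions is proved or cited in the paper, and producing an invariant momentum map on a saturated neighbourhood of an entire (possibly non-compact) orbit is essentially the theorem itself. The paper avoids slices with a direct sequence argument: properness forces every sufficiently small ball $U_{q,k}$ around $q$ to contain no ``bad pairs'', i.e.\ any two of its points in the same $G$-orbit are already related by an element of $G_qH_i$ with $H_i$ a small neighbourhood of the identity, after which invariance only needs to be checked for $G_q$ (as above) and for $H_i$ (your power-series argument, which is correct and is the one the paper uses). Until the averaging and the slice model are replaced by arguments of this kind, the proposal has a genuine gap; relatedly, the zero-dimensionality and paracompactness of $M/G$ that your $H^1$-vanishing needs are asserted rather than derived, whereas the paper never has to leave $M$.
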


Our proofs of Theorems \ref{thm:weak} and \ref{thm:hamiltonian} requires that $(M,\omega)$ is paracompact. We are not assuming this in general for $p$-adic analytic manifolds. Our main reference, Schneider's book on $p$-adic analytic Lie groups \cite{Schneider}, also does not assume paracompactness.

Theorem \ref{thm:hamiltonian} stands in strong contrast with the real case where such a result is false: there are many proper symplectic actions with Lagrangian orbits and which are not Hamiltonian, see \cite[Theorems 9.4 and 9.6]{DuiPel} and \cite[Theorem 8.2.1, case 4]{Pelayo}. See Figure \ref{fig:sets} for a comparison of the properties between the real and $p$-adic cases.

Theorem \ref{thm:hamiltonian} is false if we do not assume that $\psi$ is proper. It is difficult to find counterexamples because all Lie group actions which can be easily constructed are proper. In order to construct a non-proper one we would need two sequences of points in $M$, both convergent, such that one sequence can be converted into the other by applying a sequence of points in $G$ with no convergent subsequence (note that this is possible only for a non-compact $G$); at the same time we need to preserve the group operations and the symplectic structure.

\begin{maintheorem}[Example of non-Hamiltonian $p$-adic symplectic action]\label{thm:non-hamiltonian}
	There exists a free non-proper non-Hamiltonian $p$-adic analytic symplectic Lie group action of the field of $p$-adic numbers $\Qp$ on the Cartesian product of the ring of $p$-adic integers with itself $(\Zp)^2$.
\end{maintheorem}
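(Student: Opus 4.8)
Throughout, equip $M=(\Zp)^2$ with coordinates $(x,y)$ and the standard symplectic form $\omega=\dd x\wedge\dd y$, and regard $\Qp$ as a one‑dimensional abelian $p$‑adic analytic Lie group with $\g=\Qp$. I would first dispatch the topological claim cheaply. Non‑properness is automatic: since $\Qp$ is not compact while $M$ is compact, the map $\Qp\times M\to M\times M$, $(t,m)\mapsto(\psi(t,m),m)$, cannot be proper (the preimage of the compact set $M\times M$ is all of $\Qp\times M$); equivalently, fix $m_0$ and let $t_n\to\infty$ in $\Qp$, so that $\psi(t_n,m_0)$ has a convergent subsequence in the compact $M$ although $(t_n)$ has none. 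This is also forced by Theorem~\ref{thm:hamiltonian}: every $\Qp$‑orbit is at most one‑dimensional, hence isotropic because $\omega(\mathrm{X}_\psi(\xi),\mathrm{X}_\psi(\xi))=0$, so a proper action would be Hamiltonian. Thus the entire difficulty is to produce a \emph{free symplectic analytic} $\Qp$‑action that is \emph{not} Hamiltonian.

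By Theorem~\ref{thm:weak} the action admits a momentum map $\mu\colon M\to\g^*\cong\Qp$, determined by $\dd\mu=\iota_{\mathrm{X}_\psi(1)}\omega$ up to a locally constant function. Since $\Qp$ is abelian, being Hamiltonian is equivalent to the existence of an \emph{invariant} momentum map, so I would measure the obstruction through the cocycle $F(t,m):=\mu(\psi(t,m))-\mu(m)$. Differentiating in $t$ and using $\omega(\mathrm{X}_\psi(1),\mathrm{X}_\psi(1))=0$ shows $\partial_tF=0$, while differentiating in $m$ and using that the action is abelian (so $\psi(t,\cdot)_*\mathrm{X}_\psi(1)=\mathrm{X}_\psi(1)$) and symplectic shows $\dd_mF=0$; hence $F$ is locally constant in both variables and satisfies $F(t+s,m)=F(t,\psi(s,m))+F(s,m)$. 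The action is Hamiltonian precisely when $F$ is a coboundary, i.e. $F(t,m)=c(m)-c(\psi(t,m))$ for some locally constant $c\colon M\to\Qp$. The key point, and the reason such an example can exist over $\Qp$ but not over $\R$, is that $M$ is totally disconnected: a locally constant homomorphism $\Qp\to\Qp$ must vanish (its image would be a torsion subgroup of the torsion‑free group $\Qp$), so $F$ cannot be nonzero while independent of $m$; the whole phenomenon must come from a genuinely $m$‑dependent, non‑coboundary cocycle.

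To build the action I would use $\Qp=\varinjlim_n p^{-n}\Zp$ and specify it as a coherent tower of locally analytic symplectomorphisms $\phi_n:=\psi(p^{-n},\cdot)$ of $M$ subject to $\phi_n^{\circ p}=\phi_{n-1}$, where $\phi_0=\psi(1,\cdot)$ generates a free symplectic $\Zp$‑action with $\phi_0^{\circ p^k}\to\mathrm{id}$ as $k\to\infty$ (which guarantees continuity at $0\in\Qp$ and, via Theorem~\ref{thm:hamiltonian} applied to the proper action of the compact subgroup $\Zp$, an invariant momentum map $\mu_0$ for $\phi_0$). Working in Schneider's locally analytic category is essential: the maps $\phi_n$ need not respect reduction modulo $p^k$, and this is exactly what allows fixed‑point‑free maps to possess $p$‑th roots, bypassing the cycle‑length obstruction that forbids roots of the naive odometer $x\mapsto x+1$ in the globally convergent, reduction‑respecting setting. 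Relative to $\mu_0$ the cocycle $F$ vanishes on $\Zp$ and descends to the Pr\"ufer group $\Qp/\Zp$; its class lives in $H^1(\Qp/\Zp,C)$ with $C$ the locally constant $\Qp$‑valued functions on $M$, and I would tune the $y$‑dependent ``carry'' in the roots $\phi_n$ so that, although each $F(p^{-n},\cdot)$ sums to zero around the finite $\phi_n$‑orbits (hence is individually a coboundary), no single locally constant $c$ trivializes all levels coherently; this makes $[F]\neq0$ and the action non‑Hamiltonian.

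It then remains to verify that $(t,m)\mapsto\psi(t,m)$ is jointly locally analytic on each annulus $p^{-n}\Zp\times M$, that the $\phi_n$ are area‑preserving and assemble into a genuine $\Qp$‑action, that the action is free (trivial stabilizers, equivalently all $\phi_n$ fixed‑point‑free), and the final computation $[F]\neq0$. I expect the main obstacle to be the construction of the tower itself: producing locally analytic symplectic $p$‑th roots $\phi_n$ that are simultaneously fixed‑point‑free, coherent ($\phi_n^{\circ p}=\phi_{n-1}$), jointly analytic in $(t,m)$, and carry a nontrivial momentum cocycle. The tension is that every elementary free symplectic model---fiber translations, shears, and one‑parameter subgroups of $\SL_2(\Zp)$---either has a fixed line or admits no $p$‑power roots (consistently with the fact that $\Qp$, being divisible, has no nontrivial finite or compact quotients), so the root tower must exploit the totally disconnected, locally analytic structure of $M$ in an essential way; verifying freeness and joint analyticity of the resulting maps is where the real work lies.
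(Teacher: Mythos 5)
Your reduction of the problem is sound as far as it goes: non-properness is indeed automatic for any continuous action of the non-compact group $\Qp$ on the compact space $(\Zp)^2$ (your compactness argument matches the one the paper itself uses when discussing torus actions on compact manifolds), and your reformulation of the Hamiltonian condition as the vanishing of the locally constant cocycle $F(t,m)=\mu(\psi(t,m))-\mu(m)$ modulo coboundaries $c(m)-c(\psi(t,m))$ is correct for an Abelian group, consistent with Proposition \ref{prop:translation}(1). Your observation that $F$ cannot be $m$-independent and nonzero (a locally constant homomorphism $\Qp\to\Qp$ has open kernel, hence torsion image inside the torsion-free group $\Qp$) is also a genuinely useful sanity check on where the obstruction must live.

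The gap is that the theorem is an existence statement and you never exhibit the action. Everything after ``To build the action I would use $\Qp=\varinjlim_n p^{-n}\Zp$'' is a description of desiderata --- a coherent tower of fixed-point-free, area-preserving, locally analytic $p$-th roots $\phi_n$ with $\phi_n^{\circ p}=\phi_{n-1}$ carrying a non-coboundary cocycle --- together with your own admission that constructing such a tower and verifying freeness and joint analyticity ``is where the real work lies.'' That work is the entire content of Theorem \ref{thm:non-hamiltonian}. The paper resolves exactly the tension you identify (that $x\mapsto x+1$ on the first coordinate admits no $p$-th root within reduction-respecting maps) by an explicit digit-reversal construction: writing $\{g\}=a/p^n$, it sets $\psi(g,(x,y))=(x+\lfloor g\rfloor+d,\,y-f_n(y)+r_n(c-dp^n))$ where $r_n$ reverses the last $n$ base-$p$ digits, $c=a+r_n(f_n(y))$ and $d$ is the carry; the point is that $\phi_n(x,y)=p^nx+r_n(f_n(y))$ intertwines $\psi(g,\cdot)$ with translation by $p^ng$, which gives well-definedness, the group law, and freeness in a few lines. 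Since $\{g\}$ and $f_n(y)$ are locally constant, the action is locally the translation $(x,y)\mapsto(x+\lfloor g\rfloor,y)$, so $\mu(x,y)=y$ is a momentum map; non-Hamiltonicity then follows because the orbit of $(0,0)$ contains the points $(0,p^{n-1})=\psi(p^{-n},(0,0))$ accumulating at the origin, while any invariant momentum map would have to equal $y$ plus a locally constant function near the origin --- a concrete instance of your cocycle being nontrivial, but one that has to be produced, not merely postulated. Without a construction of this kind (or a genuinely different explicit one), the proposal does not prove the theorem.
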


\begin{figure}
	\includegraphics{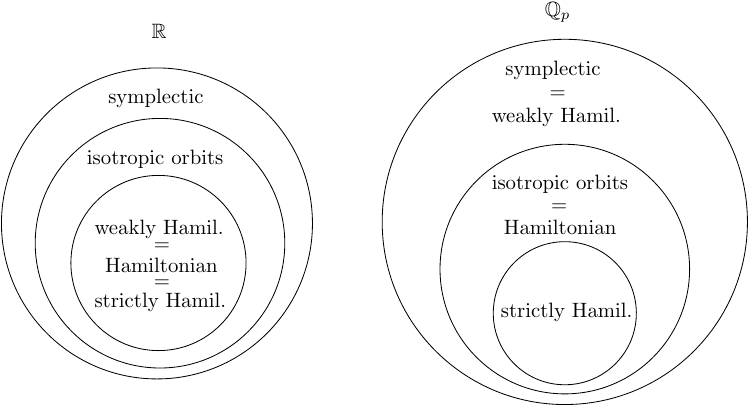}
	\caption{A comparison between the properties of proper Abelian Lie group actions on compact real symplectic manifolds (left) and proper Abelian Lie group actions on compact $p$-adic analytic symplectic manifolds (right), see Theorems \ref{thm:weak} and \ref{thm:hamiltonian} and Examples \ref{ex:oscillator}, \ref{ex:spin} and \ref{ex:translation2} for the diagram on he right-hand side.}
	\label{fig:sets}
\end{figure}

\medskip\noindent \textbf{Structure of the paper.} Section \ref{sec:prelim} introduces the necessary definitions about Lie groups and Lie group actions. Section \ref{sec:momentum} proves some results about $p$-adic analytic momentum maps and uses them to prove Theorem \ref{thm:weak}. Section \ref{sec:hamiltonian} proves Theorem \ref{thm:hamiltonian}. Section \ref{sec:torus} defines the $p$-adic torus and $p$-adic analytic symplectic toric manifolds. Section \ref{sec:examples} gives examples of $p$-adic analytic symplectic and Hamiltonian actions. Section \ref{sec:counterexample} gives an example of a $p$-adic analytic symplectic action which is not proper and not Hamiltonian despite having isotropic orbits, hence proving Theorem \ref{thm:non-hamiltonian}. Section \ref{sec:final} contains some remarks about our theorems. Finally, Appendix \ref{sec:appendix} recalls basic notions of $p$-adic geometry.

\medskip\noindent \textbf{Acknowledgments.} The second author was funded by a FBBVA (Bank Bilbao Vizcaya Argentaria Foundation) Grant for Scientific Research Projects with title \textit{From Integrability to Randomness in Symplectic and Quantum Geometry}. The second author thanks the Dean of the School of Mathematical Sciences Antonio Br\'u, and the Chair of the Department of Algebra, Geometry and Topology at the Complutense University of Madrid, Rutwig Campoamor, for their support and excellent resources he was provided with to carry out the FBBVA project.

\section{Symplectic, weakly Hamiltonian and Hamiltonian actions: definitions}\label{sec:prelim}

In this section we introduce the concepts of $p$-adic analytic Lie group and $p$-adic analytic Lie group action, as well as their properties, like being symplectic and Hamiltonian.

We refer to appendix \ref{sec:appendix} for a brief review of $p$-adic analytic manifolds and the basic concepts concerning group actions ($p$-adic or not).

\subsection{$p$-adic analytic Lie groups}

We start with the well-known notion of $p$-adic analytic Lie group.

\begin{definition}[$p$-adic analytic Lie group {\cite[section 13]{Schneider}}]
	\letpprime. Let $G$ be a $p$-adic analytic manifold endowed with a group structure. We say that $G$ is a \emph{$p$-adic analytic Lie group} if the product $G\times G\to G$ and inverse $G\to G$ functions are analytic in the topology of $G$ as a $p$-adic analytic manifold.
\end{definition}

Schneider's book \cite{Schneider} calls $p$-adic Lie groups what we call $p$-adic analytic Lie groups.

The \emph{Lie algebra} associated to $G$, denoted by $\g$, can be defined as the space of the right invariant vector fields, which can be understood as the space of the tangent vectors at the identity, identified, as a vector space, with $(\Qp)^k$, where $k$ is the dimension of $G$ as a $p$-adic analytic manifold.

Let $G$ be a $p$-adic analytic Lie group. Let $\g$ be the Lie algebra of $g$. For $g\in G$, we denote by $\Ad_g:\g\to\g$ the tangent map at the identity to the analytic map $G\to G$ which sends $h$ to $ghg^{-1}$. Let $\langle\cdot,\cdot\rangle$ denote the natural pairing between $\g^*$ and $\g$. For any $g\in G$ we define the map $\Ad^*_g:\g^*\to\g^*$ as follows. If $\eta\in\g^*$, we call $\Ad^*_g(\eta)$ the element of $\g^*$ defined by
\[\langle\xi,\Ad^*_g(\eta)\rangle=\langle\Ad_{g^{-1}}(\xi),\eta\rangle,\]
for every $\xi\in\g$. In particular, if $G$ is Abelian, $\Ad_g$ and $\Ad_g^*$ are the identity.

\subsection{$p$-adic analytic Lie group actions: symplectic and (weakly) Hamiltonian}

The following definitions introduce $p$-adic actions, as direct analogs of the notions in the real case.

\begin{definition}[$p$-adic symplectic Lie group action]
	\letpprime. Let $G$ be a $p$-adic analytic Lie group.
	
	\begin{itemize}
		\item Let $\mathbf{1}$ be the identity element of $G$. A \emph{$p$-adic analytic Lie group action} of $G$ on a $p$-adic analytic manifold $M$ is a $p$-adic analytic function $\psi:G\times M\to M$ such that $\psi(g_1,\psi(g_2,m))=\psi(g_1g_2,m)$ and $\psi(\mathbf{1},m)=m$, for every $g_1,g_2\in G$, $m\in M$.
		
		\item Let $(M,\omega)$ be a $p$-adic analytic symplectic manifold. A \emph{$p$-adic analytic symplectic Lie group action on} $(M,\omega)$ is a $p$-adic analytic Lie group action $\psi:G\times M\to M$ such that
		\[\psi(g,\cdot)^*\omega=\omega\]
		for every $g\in G$.
	\end{itemize}
\end{definition}

\begin{definition}[$p$-adic vector field generated by an action]	
	\letpprime. Let $G$ be a $p$-adic analytic Lie group with Lie algebra $\g$ and let $M$ be a $p$-adic analytic manifold. Given an action $\psi:G\times M\to M$ and $\xi\in\g$, we call $\mathrm{X}_\psi(\xi)$ the \emph{vector field generated by the action of $\psi$ in the direction of $\xi$} (or \emph{vector field of the infinitesimal action of $\xi$}). It can be defined by analogy with the real case as
	\[\mathrm{X}_\psi(\xi)(m)=\left.\frac{\dd}{\dd t}\psi(\exp(t\xi),m)\right|_{t=0}\]
	where the map $\exp:\g_\epsilon\to G$ is defined in \cite[Corollary 18.19]{Schneider}, and $\g_\epsilon$ is a small enough neighborhood of $0$ in $\g$. However, this definition (which is valid in both real and $p$-adic cases) can be rewritten in a way that, in the $p$-adic case, has the advantage of not needing the concept of exponential map of $p$-adic analytic Lie groups:
	\[\mathrm{X}_\psi(\xi)(m)=\psi(\cdot,m)_*(\xi).\]
\end{definition}

\begin{definition}[$p$-adic Hamiltonian Lie group action]\label{def:momentum-map}
	\letpprime. Let $G$ be a $p$-adic analytic Lie group with Lie algebra $\g$. Let $(M,\omega)$ be a $p$-adic analytic symplectic manifold. Let $\psi:G\times M\to M$ be a $p$-adic analytic symplectic Lie group action on $M$.
	
	\begin{itemize}
		\item We say that $\psi$ is \emph{weakly Hamiltonian} if there exists a $p$-adic analytic map \[\mu:M\to\g^*\] such that
		\begin{equation}\label{eq:hamilton}
			\omega(\mathrm{X}_\psi(\xi),\cdot)=\dd \mu_\xi,
		\end{equation}
		for each $\xi\in\g$ and $m\in M$, where $\mu_\xi:M\to\Qp$ is given by $\mu_\xi(m)=\langle \mu(m),\xi\rangle$.
		\item We say that $\mu:M\to\g^*$ is a \emph{$p$-adic analytic momentum map} for the $p$-adic analytic symplectic Lie group action of $G$ on $M$.
		\item We say that $\psi$ is \emph{Hamiltonian} if it is weakly Hamiltonian and \[\mu(\psi(g,m))=\Ad_g^*(\mu(m))\] for all $g\in G$ and $m\in M$. If $G$ is an Abelian group, this means that $\mu$ is constant on each orbit of the action, that is, $\mu(m)=\mu(m')$ if $m$ and $m'$ are related by the action of $G$. If the converse of the previous implication also holds, that is, $\mu(m)=\mu(m')$ if and only if $m$ and $m'$ are related by the action, we say that $\psi$ is \emph{strictly Hamiltonian}. See Figure \ref{fig:sphere} for a representation of the level curves of a $p$-adic analytic momentum map.
	\end{itemize}
\end{definition}

We often say ``Hamiltonian action'' instead of ``$p$-adic analytic symplectic Hamiltonian action'', ``momentum map'' instead of ``$p$-adic analytic momentum map'', etc. in order to shorten the terminology (mainly in the proofs).

\begin{remark}
	The notions in Definition \ref{def:momentum-map} are analogous to the real case. However, in the real case, the three conditions of being Hamiltonian, weakly Hamiltonian, and strictly Hamiltonian, are equivalent in some situations, for example if $M$ is compact and connected. In the $p$-adic case, by Theorem \ref{thm:weak}, every symplectic action on a paracompact manifold admits a momentum map, that is, it is weakly Hamiltonian; however, by Theorem \ref{thm:hamiltonian}, being Hamiltonian is more restrictive, and being strictly Hamiltonian is even more restrictive. See Figure \ref{fig:sets} for a comparison.
\end{remark}

\section{$p$-adic momentum maps and proof of Theorem \ref{thm:weak}}\label{sec:momentum}

In this section we study the problem of assigning a $p$-adic analytic momentum map to a given $p$-adic symplectic Lie group action on a $p$-adic analytic symplectic manifold. First we prove some results about $p$-adic analytic momentum maps in general, and we use them to prove Theorem \ref{thm:weak}.

\subsection{Properties of $p$-adic momentum maps}

As a first result, we can see that the momentum map is not unique. This also happens in the real case, where the momentum map is defined except for a translation; but in this case the possible transformations include many more than just translations.

\begin{proposition}\label{prop:translation}
	\letpprime. Let $(M,\omega)$ be a $p$-adic analytic symplectic manifold, let $G$ be an Abelian $p$-adic analytic Lie group with Lie algebra $\g$ and let $\psi:G\times M\to M$ be a weakly Hamiltonian $p$-adic analytic symplectic Lie group action.
	\begin{enumerate}
		\item Two $p$-adic analytic momentum maps $\mu,\mu':M\to\g^*$ for $\psi:G\times M\to M$ differ by a piecewise constant function.
		\item Two $p$-adic analytic momentum maps $\mu,\mu':M\to\g^*$ for $\psi:G\times M\to M$ which make the action Hamiltonian differ by the lifting to $M$ of a function $M/G\to\g^*$ which is piecewise constant.
		\item Two $p$-adic analytic momentum maps $\mu,\mu':M\to\g^*$ for $\psi:G\times M\to M$ which make the action strictly Hamiltonian differ by a bijective piecewise translation of the image.
	\end{enumerate}
\end{proposition}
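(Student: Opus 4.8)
The plan is to treat the three parts in sequence, with part (1) furnishing the essential input for (2) and (3). The unifying mechanism is a purely $p$-adic phenomenon: a $p$-adic analytic function with vanishing differential is locally constant, but—unlike in the real case—need not be globally constant, because $p$-adic analytic manifolds are totally disconnected. This is precisely the source of the ``piecewise'' qualifiers that distinguish the statement from its real analog. For part (1), I would subtract the defining relations: if $\mu$ and $\mu'$ are both momentum maps, then for every $\xi\in\g$,
\[\dd(\mu_\xi-\mu'_\xi)=\omega(\mathrm{X}_\psi(\xi),\cdot)-\omega(\mathrm{X}_\psi(\xi),\cdot)=0.\]
Letting $\xi$ range over a basis of $\g$ shows that the $\g^*$-valued analytic function $\mu-\mu'$ has identically vanishing differential. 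On each chart $\mu-\mu'$ is given by a convergent power series whose (partial) derivatives are again power series; their vanishing forces every coefficient of nonzero multi-index to vanish (here one uses that $\Qp$ has characteristic $0$, so the integer factors produced by differentiation are nonzero), whence $\mu-\mu'$ is constant on each ball of a chart covering. Thus $\mu-\mu'$ is locally constant, i.e.\ piecewise constant. It is worth stressing that analyticity is essential in this step: merely differentiable $p$-adic functions with zero derivative can fail to be locally constant.

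For part (2), I would add the hypothesis that both maps make the action Hamiltonian. Since $G$ is Abelian, $\Ad_g^*$ is the identity, so this says precisely that $\mu$ and $\mu'$ are each constant along $G$-orbits; hence so is $h:=\mu-\mu'$. A $G$-invariant function descends to $\bar h\colon M/G\to\g^*$ with $h=\bar h\circ\pi$, where $\pi\colon M\to M/G$ is the projection. By part (1) the level sets of $h$ are clopen, and they are $G$-invariant since $h$ is constant on orbits; choosing the pieces of the locally constant decomposition inside these level sets yields a partition of $M/G$ on whose parts $\bar h$ is constant, so $\bar h$ is piecewise constant, as claimed.

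For part (3), I would add strict Hamiltonicity, which says that $\mu(m)=\mu(m')$ holds if and only if $m,m'$ lie in the same orbit, and likewise for $\mu'$; equivalently, the descended maps $\bar\mu,\bar\mu'\colon M/G\to\g^*$ are injective. Define $\phi\colon\im(\mu)\to\im(\mu')$ by $\phi(\mu(m))=\mu'(m)$. Strict Hamiltonicity makes $\phi$ well defined (equal values of $\mu$ force the same orbit, hence equal values of $\mu'$) and injective (the symmetric argument applied to $\mu'$), so $\phi$ is a bijection of $\im(\mu)$ onto $\im(\mu')$. On each piece where $h=\mu-\mu'$ takes a constant value $c$ one has $\mu'=\mu+c$, so $\phi$ restricts to translation by $c$ on the corresponding part of $\im(\mu)$. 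Hence $\phi$ is a bijective piecewise translation carrying $\im(\mu)$ onto $\im(\mu')$.

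The main obstacle is conceptual rather than computational: one must recognize that in the $p$-adic setting a vanishing differential yields only local constancy, and then verify that the resulting locally constant pieces can be arranged to be $G$-invariant so that they descend to $M/G$ in parts (2) and (3). This invariance is automatic once the pieces are chosen inside the $G$-invariant level sets of $\mu-\mu'$, so the only real care required is bookkeeping at the level of the quotient.
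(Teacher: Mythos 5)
Your proposal is correct and follows essentially the same route as the paper: part (1) from the vanishing differential of $\mu-\mu'$, part (2) by descending the orbit-constant difference to $M/G$, and part (3) by observing that strict Hamiltonicity makes $\mu'\circ\mu^{-1}$ a well-defined bijection between the images which is a translation on each piece. You supply somewhat more detail than the paper (the power-series argument for why a vanishing differential forces local constancy, and the well-definedness of the induced map on images), but the underlying argument is the same.
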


\begin{proof}
	Part (1) is a direct consequence of the definition: the difference $\mu'-\mu$ must have zero differential at all points, hence it is piecewise constant; and adding a piecewise constant function to a momentum map gives another possible momentum map.
	
	Part (2) follows from part (1): if $\mu'-\mu$ is piecewise constant and $\mu$ and $\mu'$ are constant along each fiber, then $\mu'-\mu$ is also constant along each fiber, and it is the lifting to $M$ of a piecewise constant function on $M/G$.
	
	Part (3) follows from part (2): if $\mu$ and $\mu'$ make the action strictly Hamiltonian, then they descend to injective functions from $M/G$ to $\g^*$, which means that the composition $\mu'\circ\mu^{-1}$, as a function from the image of $\mu$ to the image of $\mu'$, is bijective. By part (2), it must be a piecewise translation.
\end{proof}

The following are the $p$-adic analogs of \cite[Lemmas 5.2.1 and 5.2.3]{McDSal}, where the brackets $\{\cdot,\cdot\}$ and $[\cdot,\cdot]$ are defined by analogy with the real case.

\begin{lemma}\label{lemma:poisson}
	\letpprime. Let $G$ be a $p$-adic analytic Lie group with Lie algebra $\g$ and let $(M,\omega)$ be a $p$-adic analytic symplectic manifold. Let $\psi:G\times M\to M$ be a $p$-adic analytic symplectic Hamiltonian action with $p$-adic analytic momentum map $\mu:M\to\g^*$. For every $\xi\in\g$, let $\mu_\xi:M\to\Qp$ be as defined in Definition \ref{def:momentum-map}. Then, for any $\xi,\eta\in\g$,
	\[\{\mu_\xi,\mu_\eta\}=\mu_{[\xi,\eta]}.\]
\end{lemma}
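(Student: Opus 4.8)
The plan is to derive the identity by differentiating the equivariance condition that distinguishes Hamiltonian from merely weakly Hamiltonian actions. Recall that, by analogy with the real case, the Poisson bracket is $\{f,g\}=\omega(\mathrm{X}_f,\mathrm{X}_g)$, where the Hamiltonian vector field $\mathrm{X}_f$ is determined by $\omega(\mathrm{X}_f,\cdot)=\dd f$. Comparing with the momentum map equation \eqref{eq:hamilton}, the Hamiltonian vector field of the component $\mu_\xi$ is precisely the infinitesimal generator $\mathrm{X}_\psi(\xi)$. Hence
\[
\{\mu_\xi,\mu_\eta\}=\omega(\mathrm{X}_\psi(\xi),\mathrm{X}_\psi(\eta))=\dd\mu_\xi(\mathrm{X}_\psi(\eta)),
\]
the last equality being another application of \eqref{eq:hamilton}.

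First I would rewrite the right-hand side as a derivative along the action. Using $\mathrm{X}_\psi(\eta)(m)=\left.\frac{\dd}{\dd t}\psi(\exp(t\eta),m)\right|_{t=0}$ (the exponential map is only needed on a small neighborhood of $0$ in $\g$, so this is legitimate $p$-adically), we get
\[
\dd\mu_\xi(\mathrm{X}_\psi(\eta))(m)=\left.\frac{\dd}{\dd t}\mu_\xi(\psi(\exp(t\eta),m))\right|_{t=0}.
\]
The key step is to feed in the Hamiltonian hypothesis $\mu(\psi(g,m))=\Ad_g^*(\mu(m))$. Pairing with $\xi$ and unwinding the definition of $\Ad^*$ gives $\mu_\xi(\psi(g,m))=\langle\mu(m),\Ad_{g^{-1}}(\xi)\rangle$. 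Setting $g=\exp(t\eta)$, the only $t$-dependence sits in $\Ad_{\exp(-t\eta)}(\xi)$, whose derivative at $t=0$ is $-\ad_\eta(\xi)=-[\eta,\xi]=[\xi,\eta]$, using the standard relation between the adjoint representation of the group and that of the Lie algebra (valid for $p$-adic analytic Lie groups by Schneider's theory). Therefore
\[
\left.\frac{\dd}{\dd t}\mu_\xi(\psi(\exp(t\eta),m))\right|_{t=0}=\langle\mu(m),[\xi,\eta]\rangle=\mu_{[\xi,\eta]}(m),
\]
and combining with the first display yields $\{\mu_\xi,\mu_\eta\}=\mu_{[\xi,\eta]}$.

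I expect the main obstacle to be purely the bookkeeping of signs and orderings: one must check that the conventions in the Poisson bracket, in \eqref{eq:hamilton}, and in the derivative $\left.\frac{\dd}{\dd t}\Ad_{\exp(-t\eta)}(\xi)\right|_{t=0}$ combine to give $+[\xi,\eta]$ rather than $-[\xi,\eta]$, and that the $p$-adic analog of $\left.\frac{\dd}{\dd t}\Ad_{\exp(t\eta)}\right|_{t=0}=\ad_\eta$ is cited explicitly rather than taken for granted. If one prefers to avoid differentiating equivariance directly, an alternative route is to show first that $[\mathrm{X}_\psi(\xi),\mathrm{X}_\psi(\eta)]=\mathrm{X}_\psi([\xi,\eta])$ and that $\mathrm{X}_{\{f,g\}}=[\mathrm{X}_f,\mathrm{X}_g]$; these together show that $\{\mu_\xi,\mu_\eta\}$ and $\mu_{[\xi,\eta]}$ have the same Hamiltonian vector field, so they differ by a locally constant function, and the equivariance hypothesis is then exactly what is needed to eliminate that constant.
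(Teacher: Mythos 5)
Your proposal is correct and follows essentially the same route as the paper: both identify $\{\mu_\xi,\mu_\eta\}$ with $\dd\mu_\xi(\mathrm{X}_\psi(\eta))$ via Hamilton's equation and then differentiate the equivariance identity $\mu_\xi(\psi(g,m))=\langle\mu(m),\Ad_{g^{-1}}(\xi)\rangle$ along a curve through the identity with velocity $\eta$ (the paper uses an arbitrary such curve $g(t)$ rather than $\exp(t\eta)$, which is immaterial). Your extra care in unwinding $\Ad^*$ and tracking the sign of $\left.\frac{\dd}{\dd t}\Ad_{\exp(-t\eta)}(\xi)\right|_{t=0}=[\xi,\eta]$ is if anything slightly more precise than the paper's own computation.
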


\begin{proof}
	Let $g:\Zp\to G$ such that $g(0)=\mathbf{1}$ and $\dot{g}(0)=\eta$. Let $m\in M$. Since $\psi$ is Hamiltonian,
	\[\mu_\xi(\psi(g(t),m))=\langle\mu(\psi(g(t),m)),\xi\rangle=\langle\Ad_{g(t)}^*(\mu(m)),\xi\rangle=\langle\mu(m),\Ad_{g(t)}(\xi)\rangle.\]
	By differentiating with respect to $t$, we get
	\[\dd\mu_\xi(\mathrm{X}_\psi(\eta)(m))=\langle\mu(m),[\xi,\eta]\rangle,\]
	that is
	\[\{\mu_\xi,\mu_\eta\}(m)=\mu_{[\xi,\eta]}(m)\]
	for all $m\in M$, as we wanted to prove.
\end{proof}

\begin{lemma}\label{lemma:cocycle}
	\letpprime. Let $G$ be a $p$-adic analytic Lie group with Lie algebra $\g$ and let $(M,\omega)$ be a $p$-adic analytic symplectic manifold. Let $\psi:G\times M\to M$ be a $p$-adic analytic symplectic weakly Hamiltonian action with $p$-adic analytic momentum map $\mu:M\to\g^*$. Then, there exists $\tau:\g\times\g\to\Omega^0(M)$ which satisfies the following conditions:
	\begin{itemize}
		\item $\tau$ is bilinear;
		\item for any $\xi,\eta\in\g$, $\tau(\xi,\eta)$ is locally constant;
		\item for any $\xi,\eta,\zeta\in\g$,
		\[\tau([\xi,\eta],\zeta)+\tau([\eta,\zeta],\xi)+\tau([\zeta,\xi],\eta)=0;\]
		\item for any $\xi,\eta\in\g$,
		\[\{\mu_\xi,\mu_\eta\}-\mu_{[\xi,\eta]}=\tau(\xi,\eta).\]
	\end{itemize}
\end{lemma}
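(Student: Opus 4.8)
The plan is to define $\tau$ directly by the fourth condition, setting
\[
\tau(\xi,\eta):=\{\mu_\xi,\mu_\eta\}-\mu_{[\xi,\eta]},
\]
so that this condition holds by construction and only the other three must be checked. Bilinearity is immediate: the assignment $\xi\mapsto\mu_\xi$ is linear because $\mu_\xi(m)=\langle\mu(m),\xi\rangle$ and the pairing $\langle\cdot,\cdot\rangle$ is bilinear, while $\{\cdot,\cdot\}$ and $[\cdot,\cdot]$ are bilinear; hence so is $\tau$. Thus the real content lies in the local constancy and the cyclic (cocycle) identity.

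For local constancy I would show that the differential of $\tau(\xi,\eta)$ vanishes and then invoke non-degeneracy of $\omega$. Recall from the weakly Hamiltonian hypothesis that $\mathrm{X}_\psi(\xi)$ is the Hamiltonian vector field of $\mu_\xi$, i.e. $\iota_{\mathrm{X}_\psi(\xi)}\omega=\dd\mu_\xi$. Differentiating the symplectic identity $\psi(g,\cdot)^*\omega=\omega$ along a curve $g(t)$ with $g(0)=\mathbf{1}$ and $\dot g(0)=\eta$, exactly as in the proof of Lemma \ref{lemma:poisson}, gives $\mathcal{L}_{\mathrm{X}_\psi(\eta)}\omega=0$. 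Writing $\{\mu_\xi,\mu_\eta\}=\dd\mu_\xi(\mathrm{X}_\psi(\eta))=\mathcal{L}_{\mathrm{X}_\psi(\eta)}\mu_\xi$ and using that $\dd$ commutes with $\mathcal{L}$ together with the Cartan identity $\mathcal{L}_X\iota_Y-\iota_Y\mathcal{L}_X=\iota_{[X,Y]}$, I would compute
\begin{align*}
\dd\{\mu_\xi,\mu_\eta\} &= \mathcal{L}_{\mathrm{X}_\psi(\eta)}\,\iota_{\mathrm{X}_\psi(\xi)}\omega \\
&= \iota_{\mathrm{X}_\psi(\xi)}\mathcal{L}_{\mathrm{X}_\psi(\eta)}\omega + \iota_{[\mathrm{X}_\psi(\eta),\mathrm{X}_\psi(\xi)]}\omega = \iota_{\mathrm{X}_\psi([\xi,\eta])}\omega = \dd\mu_{[\xi,\eta]},
\end{align*}
where the third equality uses $\mathcal{L}_{\mathrm{X}_\psi(\eta)}\omega=0$ and the bracket identity $[\mathrm{X}_\psi(\xi),\mathrm{X}_\psi(\eta)]=-\mathrm{X}_\psi([\xi,\eta])$. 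Hence $\dd\tau(\xi,\eta)=0$, so $\tau(\xi,\eta)$ is locally constant.

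For the cocycle identity I would combine two Jacobi identities. Because $\tau(\xi,\eta)$ is locally constant, its Poisson bracket with any function vanishes, so $\mu_{[\xi,\eta]}=\{\mu_\xi,\mu_\eta\}-\tau(\xi,\eta)$ yields $\{\mu_{[\xi,\eta]},\mu_\zeta\}=\{\{\mu_\xi,\mu_\eta\},\mu_\zeta\}$. Summing over the cyclic permutations of $(\xi,\eta,\zeta)$, the Poisson terms $\sum_{\mathrm{cyc}}\{\{\mu_\xi,\mu_\eta\},\mu_\zeta\}$ cancel by the Jacobi identity for $\{\cdot,\cdot\}$, while $\sum_{\mathrm{cyc}}\mu_{[[\xi,\eta],\zeta]}=\mu_{\sum_{\mathrm{cyc}}[[\xi,\eta],\zeta]}=0$ by the Jacobi identity for $\g$; subtracting gives $\sum_{\mathrm{cyc}}\tau([\xi,\eta],\zeta)=0$.

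The main obstacle is the local-constancy step, and specifically the bracket identity $[\mathrm{X}_\psi(\xi),\mathrm{X}_\psi(\eta)]=-\mathrm{X}_\psi([\xi,\eta])$ together with the Cartan-calculus identities used above. In the real case these rest on flow arguments, which are unavailable $p$-adically; instead I would derive the bracket identity from the definition $\mathrm{X}_\psi(\xi)(m)=\psi(\cdot,m)_*(\xi)$ and the action axioms, and verify that $\dd$, $\mathcal{L}$ and $\iota$ for $p$-adic analytic forms and vector fields satisfy the same formal relations. Once these $p$-adic analytic calculus facts are in place, the remainder of the argument transfers verbatim from the real case.
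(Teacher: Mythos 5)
Your proposal is correct and follows essentially the same route as the paper: you define $\tau(\xi,\eta)=\{\mu_\xi,\mu_\eta\}-\mu_{[\xi,\eta]}$, prove local constancy by showing its Hamiltonian vector field vanishes (the paper quotes the identity $\mathrm{X}_{\{\mu_\xi,\mu_\eta\}}=[\mathrm{X}_\psi(\xi),\mathrm{X}_\psi(\eta)]=\mathrm{X}_\psi([\xi,\eta])$ directly, where you inline its Cartan-calculus derivation), and obtain the cocycle identity from the two Jacobi identities plus the fact that locally constant functions Poisson-commute with everything. The only divergence is the sign convention for $\xi\mapsto\mathrm{X}_\psi(\xi)$, which does not affect the substance of the argument.
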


\begin{proof}
	Let $\tau(\xi,\eta)=\{\mu_\xi,\mu_\eta\}-\mu_{[\xi,\eta]}$. Then we have that $\tau$ is bilinear, and
	\[\mathrm{X}_{\{\mu_\xi,\mu_\eta\}}=[\mathrm{X}_{\mu_\xi},\mathrm{X}_{\mu_\eta}]=[\mathrm{X}_\psi(\xi),\mathrm{X}_\psi(\eta)]=\mathrm{X}_\psi([\xi,\eta])=\mathrm{X}_{\mu_{[\xi,\eta]}},\]
	which implies $\mathrm{X}_{\tau(\xi,\eta)}=0$ and $\tau(\xi,\eta)$ is locally constant. Finally, for $\xi,\eta,\zeta\in\g$,
	\begin{align*}
		& \tau([\xi,\eta],\zeta)+\tau([\eta,\zeta],\xi)+\tau([\zeta,\xi],\eta) \\
		& =\{\mu_{[\xi,\eta]},\mu_\zeta\}-\mu_{[[\xi,\eta],\zeta]}+\{\mu_{[\eta,\zeta]},\mu_\xi\}-\mu_{[[\eta,\zeta],\xi]}+\{\mu_{[\zeta,\xi]},\mu_\eta\}-\mu_{[[\zeta,\xi],\eta]} \\
		& =\{\mu_{[\xi,\eta]},\mu_\zeta\}+\{\mu_{[\eta,\zeta]},\mu_\xi\}+\{\mu_{[\zeta,\xi]},\mu_\eta\} \\
		& =\{\{\mu_\xi,\mu_\eta\}-\tau(\xi,\eta),\mu_\zeta\}+\{\{\mu_\eta,\mu_\zeta\}-\tau(\eta,\zeta),\mu_\xi\}+\{\{\mu_\zeta,\mu_\xi\}-\tau(\zeta,\xi),\mu_\eta\} \\
		& =\{\{\mu_\xi,\mu_\eta\},\mu_\zeta\}+\{\{\mu_\eta,\mu_\zeta\},\mu_\xi\}+\{\{\mu_\zeta,\mu_\xi\},\mu_\eta\} \\
		& =0,
	\end{align*}
	where we are using Jacobi's identity in the third and sixth line, and the fact that the Poisson bracket of a locally constant function with any other function is zero in the fifth line.
\end{proof}

\begin{remark}\label{rem:tau}
	If we add a locally constant function $\sigma:M\to\g^*$ to $\mu$, the new value of $\tau$ is
	\[\tau'(\xi,\eta)(m)=\tau(\xi,\eta)(m)-\langle\sigma(m),[\xi,\eta]\rangle.\]
	If we want the action to be Hamiltonian, by Lemma \ref{lemma:poisson}, we need to make $\tau'=0$, which means that
	\[\tau(\xi,\eta)(m)=\langle\sigma(m),[\xi,\eta]\rangle\]
	for some $\sigma:M\to\g^*$ locally constant. If $G$ is Abelian, this just means $\tau(\xi,\eta)=0$ or $\{\mu_\xi,\mu_\eta\}=0$, which is possible only if the orbits are isotropic, that is, the condition of Theorem \ref{thm:hamiltonian}. However, this is not enough to prove Theorem \ref{thm:hamiltonian}, because the isotropy of the orbits only implies $\{\mu_\xi,\mu_\eta\}=0$ when we want something stronger, namely, that $\mu$ is constant in each orbit. Lemma \ref{lemma:poisson} tells us that $\mu$ being constant in each orbit implies $\{\mu_\xi,\mu_\eta\}=0$, but not the converse: in the real case this needs that the group $G$ is connected, and in the $p$-adic case the only connected Lie group is the trivial group.
\end{remark}

The following statement is the $p$-adic analog of \cite[Lemma 5.2.5]{McDSal}, which is essentially identical to the statement in the real case.

\begin{lemma}\label{lemma:duals-and-orthogonals}
	\letpprime. Let $G$ be a $p$-adic analytic Lie group with Lie algebra $\g$ and let $(M,\omega)$ be a $p$-adic analytic symplectic manifold. Let $\psi:G\times M\to M$ be a $p$-adic analytic symplectic Hamiltonian group action with a $p$-adic analytic momentum map $\mu:M\to\g^*$. Let $\mathrm{I}_m:\T_mM\to \T_mM^*$ be the isomorphism induced by $\omega$ (that is, $\mathrm{I}_m(v)=\omega_m(v,\cdot)$) and let $\mathrm{L}_m:\g\to \T_mM$ be given by $\mathrm{L}_m(\xi)=\mathrm{X}_\psi(\xi)(m)$. Then the following hold.
	\begin{enumerate}
		\item The dual map of $\T_m\mu:\T_mM\to\g^*$ as a linear map is given by
		\[\T_m\mu^*=\mathrm{I}_m\circ \mathrm{L}_m:\g\to\T_mM^*.\]
		\item The symplectic complement, that is, the complement with respect to the $p$-adic symplectic form, of the kernel of $\T_m\mu$ is the tangent space to the orbit of $m$:
		\[(\ker\T_m\mu)^\omega=\im \mathrm{L}_m.\]
		\item Let $\xi\in\g$. Let $\ad(\xi):\g\to\g$ be given by $\ad(\xi)=[\xi,\cdot]$. Denote by $\ad(\xi)^*:\g^*\to\g^*$ the dual linear map of $\ad(\xi)$. Then
		\[\T_m\mu(\mathrm{L}_m(\xi))=-\ad(\xi)^*(\mu(m)).\]
	\end{enumerate}
\end{lemma}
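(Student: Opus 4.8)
The plan is to derive all three parts from the momentum-map identity \eqref{eq:hamilton} together with standard finite-dimensional linear algebra over $\Qp$; the only place where genuine $p$-adic Lie theory enters is part (3). Throughout I fix $m\in M$ and abbreviate $A=\T_m\mu:\T_mM\to\g^*$, so that its transpose is $A^*=\T_m\mu^*:\g\to\T_mM^*$ under the canonical identification $(\g^*)^*\cong\g$ in finite dimensions.

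For part (1), I would simply unwind the definition of the transpose. For $\xi\in\g$ and $v\in\T_mM$,
\[\langle A^*(\xi),v\rangle=\langle\xi,A(v)\rangle=\langle\T_m\mu(v),\xi\rangle=\dd\mu_\xi(v),\]
where the last equality holds because $\mu_\xi=\langle\cdot,\xi\rangle\circ\mu$. Now \eqref{eq:hamilton} gives $\dd\mu_\xi(v)=\omega_m(\mathrm{X}_\psi(\xi)(m),v)=\mathrm{I}_m(\mathrm{L}_m(\xi))(v)$. Since $v$ is arbitrary, $A^*(\xi)=\mathrm{I}_m(\mathrm{L}_m(\xi))$, which is the claim. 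This argument uses nothing special about $\Qp$.

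For part (2), I would reduce the symplectic complement to an annihilator. Writing $S^0\subseteq\T_mM^*$ for the annihilator of a subspace $S\subseteq\T_mM$, the definition of $\mathrm{I}_m$ gives $S^\omega=\mathrm{I}_m^{-1}(S^0)$. Applying this to $S=\ker A$ and using the standard identity $(\ker A)^0=\im A^*$, valid for linear maps of finite-dimensional $\Qp$-vector spaces, together with part (1),
\[(\ker\T_m\mu)^\omega=\mathrm{I}_m^{-1}\big((\ker A)^0\big)=\mathrm{I}_m^{-1}(\im A^*)=\mathrm{I}_m^{-1}\big(\mathrm{I}_m(\im\mathrm{L}_m)\big)=\im\mathrm{L}_m,\]
since $\mathrm{I}_m$ is an isomorphism. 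Again this is field-independent.

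Part (3) is where the real work lies, and the main obstacle is justifying the relevant $p$-adic differentiations. I would pick, as in the proof of Lemma \ref{lemma:poisson}, an analytic curve $g:\Zp\to G$ with $g(0)=\mathbf{1}$ and $\dot g(0)=\xi$; then $t\mapsto\psi(g(t),m)$ has velocity $\mathrm{X}_\psi(\xi)(m)=\mathrm{L}_m(\xi)$ at $t=0$, so that
\[\T_m\mu(\mathrm{L}_m(\xi))=\left.\frac{\dd}{\dd t}\mu(\psi(g(t),m))\right|_{t=0}=\left.\frac{\dd}{\dd t}\Ad^*_{g(t)}(\mu(m))\right|_{t=0},\]
the second equality using that $\psi$ is Hamiltonian. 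Pairing with an arbitrary $\eta\in\g$ and invoking the defining relation of $\Ad^*$ turns this into $\left.\frac{\dd}{\dd t}\langle\Ad_{g(t)^{-1}}(\eta),\mu(m)\rangle\right|_{t=0}$. The crux is then the $p$-adic identity $\left.\frac{\dd}{\dd t}\Ad_{g(t)^{-1}}(\eta)\right|_{t=0}=[-\xi,\eta]=-\ad(\xi)(\eta)$, which rests on two facts that must be secured in the $p$-adic analytic category: that $\ad$ is the differential at $\mathbf{1}$ of the analytic representation $\Ad:G\to\GL(\g)$, and that $\left.\frac{\dd}{\dd t}g(t)^{-1}\right|_{t=0}=-\xi$. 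Both should follow from Schneider's $p$-adic Lie theory and the $p$-adic chain rule, and I expect confirming that these manipulations are legitimate over $\Qp$ to be the only genuine subtlety. Granting them, I obtain
\[\langle\eta,\T_m\mu(\mathrm{L}_m(\xi))\rangle=\langle-\ad(\xi)(\eta),\mu(m)\rangle=-\langle\eta,\ad(\xi)^*(\mu(m))\rangle,\]
and since $\eta\in\g$ is arbitrary this yields $\T_m\mu(\mathrm{L}_m(\xi))=-\ad(\xi)^*(\mu(m))$, completing part (3).
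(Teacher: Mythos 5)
Your proof is correct, and parts (1) and (2) match the paper's argument in substance: part (1) is the same unwinding of the transpose via Hamilton's equation, and for part (2) the paper proves the inclusion $\im \mathrm{L}_m\subseteq(\ker\T_m\mu)^\omega$ directly and then equates dimensions using part (1), which is exactly what your identity $(\ker A)^0=\im A^*$ packages into a single step. The only genuine divergence is in part (3). The paper obtains it as a two-line corollary of Lemma \ref{lemma:poisson}: $\langle\T_m\mu(\mathrm{L}_m(\xi)),\eta\rangle=\dd\mu_\eta(m)(\mathrm{X}_\psi(\xi)(m))=\{\mu_\eta,\mu_\xi\}(m)=\mu_{[\eta,\xi]}(m)=\langle-\ad(\xi)^*(\mu(m)),\eta\rangle$, so no further differentiation is needed at that point and the $p$-adic subtlety you flag has already been absorbed into the proof of that lemma. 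You instead differentiate the equivariance identity $\mu(\psi(g(t),m))=\Ad^*_{g(t)}(\mu(m))$ directly, which is essentially a re-run of the computation that proves Lemma \ref{lemma:poisson} (there the paper differentiates $\langle\mu(m),\Ad_{g(t)}(\xi)\rangle$ in $t$ and uses the same fact that the derivative of $\Ad$ along a curve through the identity is $\ad$ of the initial velocity). Both routes are valid: citing the lemma is shorter and confines the $\Ad$-differentiation to one place, while your direct argument is self-contained and makes explicit exactly where the $p$-adic analytic justification (Schneider's construction of $\Ad$ and $\ad$, plus the $p$-adic chain rule) is actually needed.
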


\begin{proof}
	\begin{enumerate}
		\item This follows from the fact that, for any $v\in\T_m M$,
		\[\langle\T_m\mu^*(\xi),v\rangle=\langle \xi,\T_m\mu(v)\rangle=\dd\mu_\xi(m)(v)=\omega_m(\mathrm{X}_\psi(\xi)(m),v)=\langle \mathrm{I}_m(\mathrm{L}_m(\xi)),v\rangle.\]
		\item First we have that, for $u\in\ker\T_m\mu$ and $v\in\im \mathrm{L}_m$, there is $\xi$ such that $\mathrm{X}_\psi(\xi)(m)=v$, and
		\[\omega_m(u,v)=\omega_m(u,\mathrm{X}_\psi(\xi)(m))=-\dd\mu_\xi(m)(u)=0.\]
		This implies that $(\ker\T_m\mu)^\omega=\im \mathrm{L}_m$. Moreover, by part (1),
		\[\dim\im \mathrm{L}_m=\dim\im\T_m\mu=\dim M-\dim\ker\T_m\mu=\dim(\ker\T_m\mu)^\omega,\]
		hence the two spaces must coincide.
		\item For any $\xi,\eta\in\g$,
		\[\langle\T_m\mu(\mathrm{L}_m(\xi)),\eta\rangle=\dd\mu_\eta(m)(\mathrm{X}_\psi(\xi)(m))=\{\mu_\eta,\mu_\xi\}(m).\]
		By Lemma \ref{lemma:poisson},
		\[\langle\T_m\mu(\mathrm{L}_m(\xi)),\eta\rangle=\mu_{[\eta,\xi]}(m)=-\mu_{\ad(\xi)(\eta)}(m)=\langle-\ad(\xi)^*(\mu(m)),\eta\rangle,\]
		as we wanted.\qedhere
	\end{enumerate}
\end{proof}

The following lemma writes the condition of being a momentum map in terms of the coordinate functions of $\mu$.

\begin{lemma}\label{lemma:coordinates}
	\letpprime. Let $k$ be a positive integer. Let $G$ be a $p$-adic analytic Lie group of dimension $k$. Let $(M,\omega)$ be a $p$-adic analytic symplectic manifold, let $\psi:G\times M\to M$ be a $p$-adic analytic symplectic Lie group action, and let $\mu:M\to\g^*$ be any $p$-adic analytic function. Let $\{\xi_1,\ldots,\xi_k\}$ be a basis of $\g$. Then $\mu$ is a $p$-adic analytic momentum map for $\psi:G\times M\to M$ if and only if, for all $m\in M$ and $i\in\{1,\ldots,k\}$,
	\begin{equation}\label{eq:hamilton2}
		\dd\mu_{\xi_i}=\omega_m(\mathrm{X}_\psi(\xi_i)(m),\cdot)
	\end{equation}
\end{lemma}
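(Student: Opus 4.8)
The plan is to exploit the fact that both sides of the defining equation \eqref{eq:hamilton} for a momentum map are linear in $\xi$, so that agreement on a basis of $\g$ automatically forces agreement on all of $\g$. The lemma is then essentially a statement about extending a linear identity from a spanning set to the whole space.

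First I would dispose of the forward implication, which is immediate: if $\mu$ is a momentum map, then by Definition \ref{def:momentum-map} the equation \eqref{eq:hamilton} holds for every $\xi\in\g$, and in particular for each basis element $\xi_i$, which is exactly \eqref{eq:hamilton2}. For the converse I would establish the two relevant linearity facts explicitly. The assignment $\xi\mapsto\mathrm{X}_\psi(\xi)(m)$ is linear because, in the second form of its definition, $\mathrm{X}_\psi(\xi)(m)=\psi(\cdot,m)_*(\xi)$ is the image of $\xi$ under the pushforward of the analytic map $g\mapsto\psi(g,m)$ at the identity, and a pushforward is by construction a linear map $\g=\T_{\mathbf{1}}G\to\T_mM$. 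Likewise $\xi\mapsto\mu_\xi(m)=\langle\mu(m),\xi\rangle$ is linear because the natural pairing is linear in its second argument, and hence $\xi\mapsto\dd\mu_\xi$ is linear as well.

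With these in hand, I would write an arbitrary $\xi\in\g$ in the basis as $\xi=\sum_{i=1}^k c_i\xi_i$ with $c_i\in\Qp$, and compute, using the linearity of $\mathrm{X}_\psi$, the bilinearity of $\omega$, the hypothesis \eqref{eq:hamilton2} applied to each $\xi_i$, and finally the linearity of $\xi\mapsto\dd\mu_\xi$:
\[\omega_m(\mathrm{X}_\psi(\xi)(m),\cdot)=\sum_{i=1}^k c_i\,\omega_m(\mathrm{X}_\psi(\xi_i)(m),\cdot)=\sum_{i=1}^k c_i\,\dd\mu_{\xi_i}=\dd\mu_\xi.\]
Since $m$ and $\xi$ were arbitrary, this is precisely \eqref{eq:hamilton}, so $\mu$ is a momentum map.

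The only step requiring any genuine care, rather than pure formal manipulation, is the linearity of $\xi\mapsto\mathrm{X}_\psi(\xi)$; this is the point where one must invoke the actual structure of the infinitesimal action (namely that it is given by a differential) rather than just the algebra of the pairing. It is not really an obstacle, since linearity is baked into the pushforward definition, but it is worth spelling out so that the reduction from all of $\g$ to a basis is justified. No $p$-adic subtleties enter here: the argument is identical to the real case and uses only the vector-space structure of $\g$ and $\T_mM$ over $\Qp$.
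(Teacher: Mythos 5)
Your proposal is correct and follows essentially the same route as the paper: the forward direction is immediate from the definition, and the converse expands $\xi=\sum_i c_i\xi_i$ and uses the linearity of $\xi\mapsto\mathrm{X}_\psi(\xi)(m)=\psi(\cdot,m)_*(\xi)$ (as a pushforward), the bilinearity of $\omega$, and the linearity of $\xi\mapsto\mu_\xi$. Your explicit remark that the linearity of the infinitesimal action is the one step needing justification matches exactly the point where the paper invokes $\psi(\cdot,m)_*\bigl(\sum_i a_i\xi_i\bigr)=\sum_i a_i\,\psi(\cdot,m)_*(\xi_i)$.
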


\begin{proof}
	If $\mu$ is a momentum map for $\psi$, then Hamilton's equations \eqref{eq:hamilton} must hold in particular for $\xi=\xi_i$:
	\[\omega(\mathrm{X}_\psi(\xi_i),\cdot)=\dd \mu_{\xi_i}, i\in\{1,\ldots,k\},\]
	as we wanted.
	
	Conversely, if equation \eqref{eq:hamilton2} holds for all $m$ and $i\in\{1,\ldots,k\}$, then given $\xi\in\g^*$, we can write $\xi=\sum_{i=1}^k a_i\xi_i$ for $a_i\in\Qp$, and for $m\in M$ and $v\in\T_m M$, we have that
	\begin{align*}
		\omega_m(\mathrm{X}_\psi(\xi)(m),v) & =\omega_m\left(\mathrm{X}_\psi(\sum_{i=1}^k a_i\xi_i)(m),v\right) \\
		& =\omega_m\left(\psi(\cdot,m)_*(\sum_{i=1}^k a_i\xi_i),v\right) \\
		& =\sum_{i=1}^k a_i\omega_m(\psi(\cdot,m)_*(\xi_i),v) \\
		& =\sum_{i=1}^k a_i\omega_m(\mathrm{X}_\psi(\xi_i)(m),v) \\
		& =\sum_{i=1}^k a_i v(\mu_{\xi_i})=v(\sum_{i=1}^k a_i\mu_{\xi_i})=v(\mu_\xi)=\dd \mu_{\xi_i}(v),
	\end{align*}
	hence $\mu$ is a momentum map for $\psi$, as we wanted.
\end{proof}

For dimension $1$, Lemma \ref{lemma:coordinates} is a rewriting of the definition. In higher dimensions, for example if $G$ is the torus $(\Circle)^k$, Lemma \ref{lemma:coordinates} says that the coordinate functions of $G$ are the Hamiltonians for the actions of the generators of the torus.

The following lemma relates closed and exact forms on a $p$-adic ball.

\begin{lemma}\label{lemma:closed-to-exact}
	\letnpos. \letpprime. Let $\alpha$ be a closed $1$-form on $(\Zp)^n$ such that the components of $\alpha$ are given as power series each one converging in $(\Zp)^n$. Then \[\alpha=\dd f\] for some power series $f$ converging in $(\Zp)^n$.
\end{lemma}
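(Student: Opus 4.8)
The plan is to prove the $p$-adic Poincar\'e lemma for $1$-forms on the polydisc $(\Zp)^n$ by the classical approach: write $\alpha=\sum_{i=1}^n \alpha_i\,\dd x_i$, where each $\alpha_i$ is a power series converging on $(\Zp)^n$, and construct $f$ term by term so that $\partial f/\partial x_i=\alpha_i$ for all $i$. Since power series can be integrated coefficient-wise, the natural candidate is obtained by integrating $\alpha_1$ in the variable $x_1$, and then correcting with the remaining variables. The closedness condition $\dd\alpha=0$, which in coordinates reads $\partial\alpha_i/\partial x_j=\partial\alpha_j/\partial x_i$ for all $i,j$, is exactly what guarantees these integrations are consistent.

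Concretely, first I would reduce to a single monomial-by-monomial computation. Write $\alpha_i=\sum_{\nu\in\N^n} c_{i,\nu}\,x^\nu$ with multi-index notation $x^\nu=x_1^{\nu_1}\cdots x_n^{\nu_n}$. Formally antidifferentiating, set
\[
f=\sum_{i=1}^n\ \sum_{\substack{\nu\in\N^n\\ \nu_{i+1}=\cdots=\nu_n=0}} \frac{c_{i,\nu}}{\nu_i+1}\,x_1^{\nu_1}\cdots x_i^{\nu_i+1},
\]
the telescoping primitive that integrates $\alpha_1$ fully in $x_1$, then integrates the part of $\alpha_2$ independent of $x_1$ in $x_2$, and so on. A direct check using the closedness relations $\partial\alpha_i/\partial x_j=\partial\alpha_j/\partial x_i$ shows $\partial f/\partial x_i=\alpha_i$ for each $i$, so $\dd f=\alpha$ as formal power series. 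This part is purely algebraic and mirrors the real Poincar\'e lemma verbatim.

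The main obstacle is the \emph{convergence} of $f$ on $(\Zp)^n$, and this is where the $p$-adic setting genuinely differs from the real one. The antidifferentiation divides the coefficient $c_{i,\nu}$ by $\nu_i+1$, and while over $\R$ or $\C$ dividing by an integer is harmless, over $\Qp$ the factor $1/(\nu_i+1)$ can have large $p$-adic absolute value: indeed $|\nu_i+1|_p^{-1}=p^{\ord_p(\nu_i+1)}$ grows when $\nu_i+1$ is divisible by high powers of $p$. The key estimate I would establish is that this blow-up is controlled: since $\ord_p(\nu_i+1)\le\log_p(\nu_i+1)$, the factor $|1/(\nu_i+1)|_p$ grows only \emph{polynomially} in $\nu_i$, whereas convergence of the original series $\alpha_i$ on $(\Zp)^n$ forces $|c_{i,\nu}|_p\to0$ (in fact the coefficients are bounded, since $|x^\nu|_p\le1$ on the polydisc and convergence on the closed unit polydisc is equivalent to $|c_{i,\nu}|_p\to0$). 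I would therefore verify $|c_{i,\nu}/(\nu_i+1)|_p\to0$ as $|\nu|\to\infty$, which is the precise criterion for the power series $f$ to converge on $(\Zp)^n$. The delicate point deserving care is confirming that the mild $p^{\ord_p(\nu_i+1)}$ amplification cannot outpace the decay of $|c_{i,\nu}|_p$; this follows because a convergent power series on the closed polydisc has coefficients tending to $0$, and multiplying a null sequence by a sequence of polynomial growth in the index still tends to $0$ in the non-archimedean absolute value once one observes $p^{\ord_p(\nu_i+1)}\le\nu_i+1$. Closing this estimate completes the proof.
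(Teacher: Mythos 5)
Your formal construction of the telescoping primitive is correct and is essentially the same one-variable-at-a-time integration that the paper performs by induction on $n$; the algebraic verification via the closedness relations is standard. The genuine gap is in the convergence step, and the specific claim you use to close it is false: a null sequence multiplied by a factor of polynomial growth in the index need \emph{not} tend to zero. Concretely, in one variable take $c_k=p^m$ when $k=p^m-1$ and $c_k=0$ otherwise. Then $|c_k|_p=p^{-m}\to 0$, so $\alpha=\bigl(\sum_{m\ge 1}p^m x^{p^m-1}\bigr)\dd x$ has its component converging on all of $\Zp$; but for $k=p^m-1$ one has $|c_k/(k+1)|_p=p^{-m}\cdot p^{m}=1$, so the primitive $\sum_{m\ge 1}x^{p^m}$ has all coefficients equal to $1$ and diverges at $x=1\in\Zp$. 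The bound $p^{\ord_p(k+1)}\le k+1$ is correct but useless here: the hypothesis $|c_k|_p\to 0$ carries no rate, so a polynomially large amplification cannot in general be absorbed. This is the familiar phenomenon that $p$-adic antidifferentiation shrinks the domain of convergence, and your estimate cannot be repaired on the closed unit polydisc.

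You should also know that the example above is in fact a counterexample to the lemma as literally stated, and that the paper's own proof does not address the point either: its base case $n=1$ simply asserts that a primitive ``always exists and it will converge in $(\Zp)^n$ if the original series converges in $(\Zp)^n$,'' with no estimate. What is true, and what suffices for the application in the proof of Theorem~A (where one is free to subdivide into smaller balls), is a local version: after rescaling so that the variable ranges over $p\Zp$ with $|c_k|_p\le 1$, one gets $|c_k/(k+1)|_p\,|x|_p^{k+1}\le (k+1)\,p^{-(k+1)}\to 0$, so the primitive converges on the smaller ball. If you restate your convergence claim on a strictly smaller polydisc (or assume a geometric decay rate for the $|c_{i,\nu}|_p$), your argument goes through; as written, it does not.
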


\begin{proof}
	Let $\alpha=\sum_{i=1}^n \alpha_i\dd x_i$, where $(x_1,\ldots,x_n)$ are coordinates on $(\Zp)^n$ and $\alpha_i:(\Zp)^n\to\Qp$ is a power series. Since $\alpha$ is closed, we have that
	\[\frac{\partial\alpha_i}{\partial x_j}=\frac{\partial\alpha_j}{\partial x_i}\]
	for all $i,j\in\{1,\ldots,n\}$.
	
	We prove the lemma by induction on $n$. For $n=1$, we can take a power series $f$ converging on $\Zp$ and whose derivative gives $\alpha_1$; this power series always exists and it will converge in $(\Zp)^n$ if the original series converges in $(\Zp)^n$. Now $\dd f=\alpha_1$, as we wanted.
	
	Now we suppose the result holds for $n$ and we prove it for $n+1$. Let $S$ be the subset of points in $(\Zp)^{n+1}$ whose last coordinate is $0$, and let $\beta$ be the $1$-form obtained from $\alpha$ by restricting it to $S$ and removing the last component. By induction hypothesis, $\beta=\dd f$ for some $f:S\to\Qp$.
	
	We extend $f$ to $(\Zp)^{n+1}$ in such a way that
	\[\frac{\partial f}{\partial x_{n+1}}=\alpha_{n+1}.\]
	We now have to check that
	\[\frac{\partial f}{\partial x_i}=\alpha_i\]
	for all $i\in\{1,\ldots,n\}$. Let $m\in(\Zp)^n$ and $m'\in S$ obtained from $m$ by making the last coordinate zero. We have that
	\begin{align*}
		\frac{\partial}{\partial x_{n+1}}\left(\frac{\partial f}{\partial x_i}-\alpha_i\right) & =\frac{\partial^2 f}{\partial x_i\partial x_{n+1}}-\frac{\partial\alpha_i}{\partial x_{n+1}} \\
		& =\frac{\partial\alpha_{n+1}}{\partial x_i}-\frac{\partial\alpha_i}{\partial x_{n+1}}=0.
	\end{align*}
	Since $\partial f/\partial x_i-\alpha_i$ is a power series and its derivative respect to $x_{n+1}$ is zero, it must be constant along the line through $m$ and $m'$. Its value at $m'$ is zero by induction hypothesis, hence it is also zero at $m$ and we are done.
\end{proof}

Now we are ready to prove our first main result.

\subsection{Proof of Theorem \ref{thm:weak}}

Let $2n$ be the dimension of $M$. Let $\{\xi_1,\ldots,\xi_k\}$ be a basis for $\g$. By Lemma \ref{lemma:coordinates}, we want to find $\mu:M\to\g^*$ such that \eqref{eq:hamilton2} holds. By \cite[Corollary 3.2]{CrePel-JC}, we can write $M$ as a disjoint union of $p$-adic balls, and the problem reduces to finding $\mu$ for each ball. This, in turn, is equivalent to finding the $k$ coordinate functions $\mu_i=\mu_{\xi_i}$, for $i\in\{1,\ldots,k\}$.
	
Now, we want to find $\mu_i$ defined on a ball such that $\dd\mu_i=\omega(\mathrm{X}_\psi(\xi_i),\cdot)$. Since $\psi$ is symplectic, $\mathrm{X}_\psi(\xi_i)$ preserves $\omega$, and the right-hand side $\omega(\mathrm{X}_\psi(\xi_i),\cdot)$ is a closed $1$-form. We may assume that the components of this form are given by power series converging in the ball; otherwise, we can divide the ball into smaller balls where this property holds. By Lemma \ref{lemma:closed-to-exact}, there exists on each ball a function $\mu_i$ such that $\dd\mu_i=\omega(\mathrm{X}_\psi(\xi_i),\cdot)$, and the result is proved.

\section{Proof of Theorem \ref{thm:hamiltonian}: $p$-adic momentum maps and isotropic orbits}\label{sec:hamiltonian}

Suppose that $\psi$ is Hamiltonian. Let $\xi,\eta\in\g$ and define $\mu_\xi(m)=\langle\mu(m),\xi\rangle$ and $\mu_\eta(m)=\langle\mu(m),\eta\rangle$. Since $\psi$ is Hamiltonian, $\mu$ is preserved by the action of $G$, and $\dd \mu_\xi(\mathrm{X}_\psi(\eta))=0$. By Hamilton's equation, this implies $\omega(\mathrm{X}_\psi(\xi),\mathrm{X}_\psi(\eta))=0$, as we wanted.
	
Suppose now that $\omega(\mathrm{X}_\psi(\xi),\mathrm{X}_\psi(\eta))=0$ for all $\xi,\eta\in\g$. We must define a momentum map $\mu$ for $\psi$ which is constant on each orbit.
	
\smallskip\textit{Step 1: divide $M$ into balls such that $\psi$ is a power series in each ball.} Since $M$ is paracompact, $M$ is a disjoint union of $p$-adic balls, that is, there is a set $I$ such that
\[M=\bigcup_{i\in I}B_i\]
where the $B_i, i\in I,$ are pairwise disjoint balls. We can choose coordinates on each ball of the form $(x_1,y_1,\ldots,x_n,y_n)$ where $\dim M=2n$. (Actually, by \cite[Lemma 6.1]{CrePel-Darboux}, we can choose the balls so that they are symplectic, which means that \[\omega=\sum_{j=1}^n\dd x_j\wedge\dd y_j\] in the coordinates of the ball. This proof, however, does not need the balls to be symplectic.) We may assume, after dividing the balls if needed, that for each $i\in I$ there exists a neighborhood $H_i$ of the identity in $G$ such that, for $(g,m)\in H_i\times B_i$, the coordinates of $\psi(g,m)$ are given by power series in the coordinates of $g$ and $m$. In particular, $B_i$ is invariant by the action of $H_i$. See Figure \ref{fig:orbit} for an illustration of this step.
	
\begin{figure}
	\includegraphics{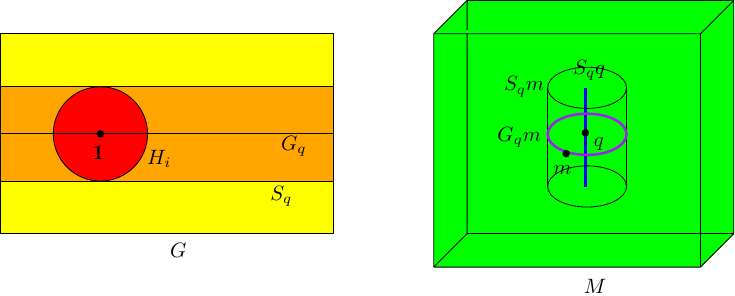}
	\caption{Step 1 of the proof of Theorem \ref{thm:hamiltonian}. The red circle around $\mathbf{1}\in G$ represents $H_i$. For $q\in M$, the horizontal line represents $G_q$, the stabilizer of $q$, and the orange band is $S_q=G_qH_i$. The blue line represents $S_qq$, which is the same as $H_iq$. We have marked a point $m$ near $q$; then the cylinder is $S_qm$ and the purple curve is $G_qm$ (when $m$ tends to $q$, the curve collapses to a point and the cylinder to the blue line).}
	\label{fig:orbit}
\end{figure}

\smallskip\textit{Step 2: reduce the size of the balls such that there are no bad pairs in the ball.} Let $q\in M$, and let $i\in I$ such that $q\in B_i$. We define, for each $k\in\N$,
\[U_{q,k}=\Big\{m\in B_i:\dd_{B_i}(q,m)\le p^{-k}\Big\},\]
where $\dd_{B_i}$ is the metric defined by the $p$-adic norm with respect to the coordinates $(x_1,y_1,\ldots,x_n,y_n)$ on $B_i$. Let $G_q$ be the stabilizer of $q$ and let \[S_q=G_qH_i.\] We call $(m,m')\in B_i\times B_i$ a \emph{bad pair} if $m,m'$ are related by the action of $G$ but not by the action of $S_q$, that is, there is $g\in G$ such that \[m'=\psi(g,m),\] but there is no $g\in S_q$ such that this happens.
	
Suppose that, for all $k\in\N$, there exists a bad pair $(m_k,m_k')$ contained in $U_{q,k}$ (see Figure \ref{fig:bad-pairs}). Then, for each $k\in\N$, there exists $g_k\in G$ such that \[m_k'=\psi(g_k,m_k).\] Both sequences $(m_k)_k$ and $(m_k')_k$ converge to $q$. Since the action is proper, there exists a convergent subsequence of $(g_k)_k$, which we still call $(g_k)_k$ (that is, we remove all terms not in the subsequence). Let $g_0$ be its limit and let \[g_k'=g_0^{-1}g_k.\] By continuity, $\psi(g_0,q)=q$ and $g_0\in G_q$. The sequence $(g_k')_k$ converges to the identity, hence $g_k'\in H_i$ for $k$ big enough. This implies \[g_k=g_0g_k'\in S_q\] for $k$ big enough, which contradicts the definition of bad pair. Therefore, for some $k$, $U_{q,k}$ does not contain bad pairs. We call \[U_q=U_{q,k}.\]

\begin{figure}
	\includegraphics{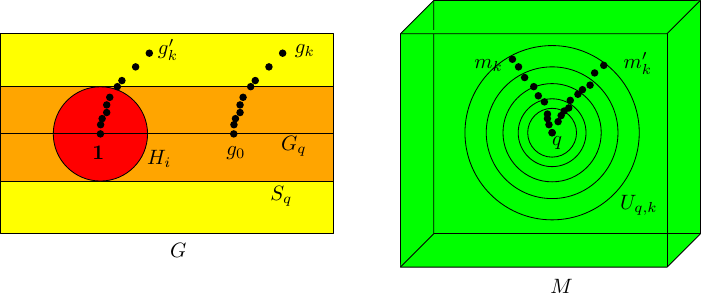}
	\caption{Step 2 of the proof of Theorem \ref{thm:hamiltonian}. For each $k\in\N$, $(m_k,m_k')$ is a bad pair contained in $U_{q,k}$, $g_k$ is such that $\psi(g_k,m_k)=m_k'$, $g_0$ is their limit, and $g_k'=g_0^{-1}g_k$.}
	\label{fig:bad-pairs}
\end{figure}
	
\smallskip\textit{Step 3: extract a covering by disjoint sets.} The sets $\{U_q\}_{q\in M}$ form an open covering of $M$. Since $M$ is paracompact, we can find a subcovering formed by disjoint open sets $\{U_q\}_{q\in Q}$.
	
\smallskip\textit{Step 4: define $\mu$ inductively.} Fix a good order $<$ in $Q$. Now we define $\mu$ successively on each $U_q$, following this order. Suppose we have defined $\mu$ on all sets $U_{q'}$ for $q'<q$, and we define it on $U_q$. We write \[U_q=U_q'\cup U_q'',\] where the points in $U_q'$ are related to those in $U_{q'}$ for some $q'<q$ by the action of $G$, and those in $U_q''$ are not. We will define $\mu$ first on $U_q'$ and then on $U_q''$.
	
\smallskip\textit{Step 4a: define $\mu$ on $U_q'$.} If $U_q'$ is empty, there is nothing to do. Otherwise, for each point $m\in U_q'$, there is $q'<q$ and $g\in G$ such that $\psi(g,m)\in U_{q'}$ (see Figure \ref{fig:Uq}). We define \[\mu(m)=\mu(\psi(g,m)).\] Since $\mu$ is constant in each orbit, the precise choice of $q'$ and $g$ does not affect the result. This extends $\mu$ to $U_q'$ in such a way that is still constant in each orbit.
	
\begin{figure}
	\includegraphics{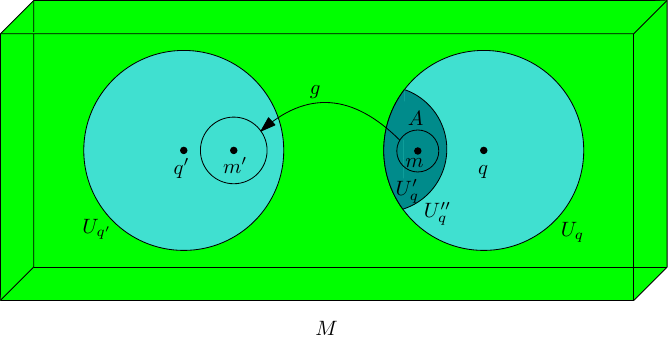}
	\caption{Step 4a of the proof of Theorem \ref{thm:hamiltonian}. The set $U_q$ is split into $U_q'$ and $U_q''$, and we are defining $\mu$ in $U_q'$ by copying its value from a previous set $U_{q'}$.}
	\label{fig:Uq}
\end{figure}

We must now prove that $\mu$ satisfies Hamilton's equations on $U_q'$. Let $m\in U_q'$. We choose $q'<q$ and $g\in G$ such that $\psi(g,m)\in U_{q'}$. Let \[m'=\psi(g,m).\] Then we have that, for $\xi\in\g$,
\begin{align*}
	\omega_m(\mathrm{X}_\psi(\xi)(m),\cdot) & =(\psi(g,\cdot)^*\omega_{m'})(\mathrm{X}_\psi(\xi)(m),\cdot) \\
	& =\psi(g,\cdot)^*\omega_{m'}(\psi(g,\cdot)_*(\mathrm{X}_\psi(\xi)(m)),\cdot) \\
	& =\psi(g,\cdot)^*\omega_{m'}(\mathrm{X}_\psi(\xi)(m'),\cdot) \\
	& =\psi(g,\cdot)^*\dd \mu_\xi(m'),
\end{align*}
where in the first line we are using that $\psi$ is symplectic, in the third that $G$ is Abelian, and in the fourth the induction hypothesis.
	
Let $A$ be a neighborhood of $m$ such that $A\subset U_q'$ and $\psi(g,A)\subset U_{q'}$. For each $m''\in A$, \[\mu(m'')=\mu(\psi(g,m'')).\] Hence, \[\mu_\xi(m'')=\mu_\xi(\psi(g,m''))\] and
\begin{align*}
	\omega_m(\mathrm{X}_\psi(\xi)(m),\cdot) & =\psi(g,\cdot)^*\dd \mu_\xi(m') \\
	& =\dd \mu_\xi(m),
\end{align*}
and we have extended the momentum map to $U_q'$. See Figure \ref{fig:Uq}.

\smallskip\textit{Step 4b: define $\mu$ in $U_q''$.} If $U_q''$ is empty, there is nothing to do. Otherwise, we start by defining a momentum map \[\mu':U_q\to\g^*\] for the action $\psi$ whose components are given by power series converging in $U_q$. We can do that with the same strategy of the proof of Theorem \ref{thm:weak}: we have that $U_q$ is contained in $B_i$ for some $i\in I$, and for $g$ near the identity and $m\in U_q\subset B_i$ the coordinates of $\psi(g,m)$ are power series in the coordinates of $g$ and $m$, so the same happens with $\omega(\mathrm{X}_\psi(\xi_j),\cdot)$ for each $j\in\{1,\ldots,k\}$.
	
If $U_q'$ is empty, we can just take $\mu=\mu'$ on $U_q$. Otherwise, let $\tau:U_q'\to\g^*$ be given by \[\tau=\mu'-\mu.\] Since both $\mu$ and $\mu'$ are momentum maps for $\psi$ when restricted to $U_q'$, by Proposition \ref{prop:translation}(1), $\tau$ is locally constant. We now extend $\tau$ to a locally constant function defined on all $U_q$ such that, if $m\in U_q''$ and $\psi(g,m)\in U_q''$ for some $g\in G$, then \[\tau(\psi(g,m))=\tau(m).\] Finally, we define \[\mu=\mu'-\tau\] on $U_q''$. By construction, $\mu$ satisfies Hamilton's equations on $U_q''$; in order to prove that the action of $G$ preserves $\mu$, since $U_q$ does not contain bad pairs, it is enough to show it for the action of $S_q$, that is, of $G_q$ and $H_i$. See Figure \ref{fig:Uq2} for this construction.

\begin{figure}
	\includegraphics{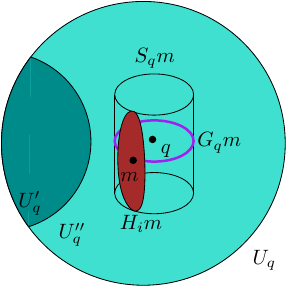}
	\caption{Step 4b of the proof of Theorem \ref{thm:hamiltonian}. The orbit of $m$ is contained in $U_q''$, and we need to prove that $\mu$ is constant along the purple curve $G_qm$ and in the brown set $H_im$.}
	\label{fig:Uq2}
\end{figure}
	
Let $m\in U_q''$. Since the components of $\mu'$ are given by power series in the coordinates on $U_q$, and the same happens with $\tau$ when restricted to the orbit of $m$ (because it is constant in the orbit), this also applies to $\mu$ in the orbit of $m$. For $g\in G_q$, since the action of $g$ preserves $\omega$ and $G$ is Abelian, this action also preserves the $1$-form $\omega(\mathrm{X}_\psi(\xi),\cdot)=\dd \mu_\xi$ for $\xi\in\g$, which means that the difference $\mu_\xi(\psi(g,m))-\mu_\xi(m)$ must be independent of $m$, and the same happens with $\mu(\psi(g,m))-\mu(m)$. But this difference is $0$ for $m=q$ because $\psi(g,q)=q$, so it must be constantly $0$ and \[\mu(\psi(g,m))=\mu(m).\] This proves the case of $g\in G_q$.
	
Now suppose $g\in H_i$. For $\xi,\eta\in\g$,
\begin{align*}
	\dd\langle\mu(\psi(\cdot,m)),\xi\rangle(g)(\eta) & =\dd [\mu_\xi(\psi(\cdot,m))](g)(\eta) \\
	& =\dd \mu_\xi(\psi(g,m))(\mathrm{X}_\psi(\eta)) \\
	& =\omega_{\psi(g,m)}(\mathrm{X}_\psi(\xi),\mathrm{X}_\psi(\eta)) \\
	& =0,
\end{align*}
and $\langle\mu(\psi(g,m)),\xi\rangle$ is given by a power series in the coordinates of $g$, hence it must be constant in $H_i$, and $\mu$ is constant in $H_im$, for each $m\in U_q''$, as we wanted to prove.

\section{The $p$-adic torus and $p$-adic symplectic toric manifolds}\label{sec:torus}

In this section we start by defining the $p$-adic torus and explaining its properties. We have already defined it in \cite{CrePel-JC} as the Cartesian product of the $p$-adic circle group $\Circle$ with itself. Then we define $p$-adic symplectic toric manifolds and give some examples. Finally we prove some partial results about the classification of $p$-adic symplectic toric manifolds.

\subsection{The $p$-adic torus}

In the real case, if a Lie group is compact, connected and Abelian, then it must be isomorphic to the torus $(\mathrm{S}^1)^k$, where $k$ is the dimension of the Lie group. In the $p$-adic case, a connected Lie group must be trivial; if we only assume compact and Abelian, there is in general not such isomorphism, and the torus is just one such group, which is not even compact if $p\equiv 1\mod 4$ (but in this case a compact subgroup can be taken instead). Another example of a compact Abelian Lie group is $(\Zp)^k$.

\begin{definition}
	Let $k$ be a positive integer. \letpprime. We define the \emph{$p$-adic circle} as
	\[\Circle=\Big\{(a,b)\in(\Qp)^2:a^2+b^2=1\Big\}.\]
	The \emph{$k$-dimensional $p$-adic torus} is defined as the $k$-th Cartesian product of the circle, that is, $(\Circle)^k$. We endow $\Circle$, and consequently $(\Circle)^k$, with an Abelian group structure where the product is given by
	\[(a,b)\cdot(a',b')=(aa'-bb',ab'+a'b),\]
	the identity element is $(1,0)$ and the inverse
	\[(a,b)^{-1}=(a,-b).\]
\end{definition}

\begin{proposition}\label{prop:lie}
	Let $k$ be a positive integer. \letpprime. The $k$-dimensional $p$-adic torus, endowed with the standard structure of the circle as a $p$-adic analytic manifold, is a $p$-adic analytic Lie group whose Lie algebra $\T_{\mathbf{1}}(\Circle)^k$ can be identified with $(\Qp)^k$ by the even-index coordinates.
\end{proposition}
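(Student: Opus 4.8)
The plan is to reduce immediately to the case $k=1$. A finite Cartesian product of $p$-adic analytic Lie groups is again one, with coordinatewise operations that are analytic precisely because those of each factor are; moreover the tangent space at the identity of a product is the direct sum of the tangent spaces of the factors. So it suffices to show that the single circle $\Circle$ is a one-dimensional $p$-adic analytic Lie group with $\T_{(1,0)}\Circle$ identified with $\Qp$ via the second coordinate, and then take the direct sum over the $k$ factors.

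First I would record the manifold structure: $\Circle$ is the zero set of $f(a,b)=a^2+b^2-1$, whose differential $\dd f=(2a,2b)$ never vanishes on $\Circle$ (a point with $a=b=0$ would give $a^2+b^2=0\neq 1$), so by the $p$-adic implicit function theorem $\Circle$ is an embedded one-dimensional analytic submanifold of $(\Qp)^2$; this is the standard structure referred to in the statement (see \cite{CrePel-JC}). Next I would verify the group operations. They are well defined, i.e.\ they land in $\Circle$, by the direct computation $(aa'-bb')^2+(ab'+a'b)^2=(a^2+b^2)(a'^2+b'^2)=1$ and $a^2+(-b)^2=a^2+b^2=1$. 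The group axioms are cleanest upon identifying $\Circle$ with the set of norm-one elements of the commutative ring $\Qp[x]/(x^2+1)$ via $(a,b)\leftrightarrow a+bx$, under which the stated product is ring multiplication, $(1,0)$ is the unit, and $(a,-b)\leftrightarrow a-bx$ is the inverse (conjugation, since $(a+bx)(a-bx)=a^2+b^2=1$); associativity and commutativity are then inherited from the ring, and this identification works whether or not $-1$ is a square in $\Qp$. Since the product $\Circle\times\Circle\to(\Qp)^2$ and the inverse $\Circle\to(\Qp)^2$ are given by polynomials, they are analytic into the ambient space, and because their images lie in the embedded submanifold $\Circle$ they are analytic as maps into $\Circle$. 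Hence $\Circle$, and therefore $(\Circle)^k$, is a $p$-adic analytic Lie group.

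For the Lie algebra, I would compute $\T_{(1,0)}\Circle$ by differentiating the defining relation: $a\,\dd a+b\,\dd b=0$ evaluated at $(1,0)$ forces $\dd a=0$, so the tangent space is $\{(0,t):t\in\Qp\}$, which the second coordinate identifies with $\Qp$. Assembling the $k$ factors with coordinates $(a_1,b_1,\dots,a_k,b_k)$ on $(\Circle)^k\subset(\Qp)^{2k}$, the tangent space $\T_{\mathbf{1}}(\Circle)^k$ is the direct sum of the $k$ copies, identified with $(\Qp)^k$ exactly through the even-index coordinates $b_1,\dots,b_k$ (positions $2,4,\dots,2k$), as claimed.

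I expect no genuinely deep obstacle here; the one point requiring care is the analyticity of the restricted operations. I handle it by the embedded-submanifold principle, namely that a polynomial (hence analytic) map into the ambient $(\Qp)^2$ whose image lies in the embedded submanifold $\Circle$ is analytic as a map into $\Circle$, combined with the closure computations showing that the product and inverse genuinely take values in $\Circle$.
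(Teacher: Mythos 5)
Your proposal is correct and follows essentially the same route as the paper's (much terser) proof: verify analyticity of the coordinatewise product and inverse, then identify $\T_{\mathbf{1}}(\Circle)^k$ with $(\Qp)^k$ via the $b$-coordinates, the paper doing so through the local chart $b_1,\ldots,b_k$ near the identity and you through differentiating the defining relation $a\,\dd a+b\,\dd b=0$, which is the same implicit-function-theorem fact. Your added details (closure of the operations, the ring $\Qp[x]/(x^2+1)$, the embedded-submanifold principle) are all sound and merely make explicit what the paper leaves implicit.
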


\begin{proof}
	Being a Lie group follows from the fact that the product and the inverse in $(\Circle)^k$ are analytic. The Lie algebra can be defined as the vector space of the tangent vectors at the identity. In a neighborhood of $(1,0,1,0,\ldots,1,0)$, each point $(a_1,b_1,\ldots,a_k,b_k)$ in the torus can be identified with its coordinates $b_1,\ldots,b_k$, which means that the tangent space at that point is $(\Qp)^k$.
\end{proof}

\begin{proposition}[{\cite[Corollary 4.6]{CrePel-JC}}]\label{prop:structure-circle}
	\letpprime. The $p$-adic circle $\Circle$ is isomorphic to a product $\hatcircle\times\ovcircle\times p^d\Zp$, where $d$ is $2$ if $p=2$ and otherwise $1$, and $\hatcircle$ and $\ovcircle$ are discrete subgroups defined as follows:
	\begin{enumerate}
		\item $\hatcircle=\Z$ and $\ovcircle=\Z/(p-1)\Z$, if $p\equiv 1\mod 4$;
		\item $\hatcircle=\{\mathbf{1}\}$ and $\ovcircle=\Z/(p+1)\Z$, if $p\equiv 3\mod 4$;
		\item $\hatcircle=\{\mathbf{1}\}$ and $\ovcircle=\Z/4\Z$, if $p=2$.
	\end{enumerate}
	Moreover, the inclusion $p^d\Zp\hookrightarrow\Circle$ is given by $t\mapsto(\cos t,\sin t)$.
\end{proposition}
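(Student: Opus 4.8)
The plan is to recognize that the group law on $\Circle$ is exactly complex multiplication, so that $\Circle$ becomes a norm-one subgroup inside the two-dimensional \'etale $\Qp$-algebra $A=\Qp[\ii]$ with $\ii^2=-1$: writing $z=a+b\ii$ with conjugation $\overline{a+b\ii}=a-b\ii$, one has $N(z)=z\bar z=a^2+b^2$, and $\Circle=\{z\in A^\times:N(z)=1\}$. The decomposition then follows from the structure theory of local unit groups, split into three cases according to whether $-1$ is a square in $\Qp$.

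First, in the split case $p\equiv 1\bmod 4$ one has $-1=s^2$ for some $s\in\Qp$, so $A\cong\Qp\times\Qp$ via $a+b\ii\mapsto(a+sb,a-sb)$, under which the norm becomes the product of the two coordinates. Hence $\Circle\cong\{(u,u^{-1}):u\in\Qp^\times\}\cong\Qp^\times$, with explicit preimage $a=(u+u^{-1})/2$, $b=(u-u^{-1})/(2s)$. I would then invoke the standard isomorphism $\Qp^\times\cong\Z\times\mu_{p-1}\times(1+p\Zp)$, where the valuation gives the factor $\Z$, the Teichm\"uller lift gives the cyclic group $\mu_{p-1}$ of $(p-1)$-th roots of unity, and the $p$-adic logarithm gives $1+p\Zp\cong\Zp$. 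This matches $\hatcircle=\Z$, $\ovcircle=\Z/(p-1)\Z$ and the factor $p^d\Zp$ with $d=1$.

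Second, in the inert odd case $p\equiv 3\bmod 4$ the algebra $K=A$ is the unramified quadratic extension of $\Qp$ and $\Circle=\ker N_{K/\Qp}$. Every norm-one element is a unit, so $\Circle\subset\mathcal{O}_K^\times$ is compact and the valuation factor disappears, forcing $\hatcircle=\{\mathbf{1}\}$. Decomposing $\mathcal{O}_K^\times\cong\mu_{p^2-1}\times(1+\mathfrak{m}_K)$, the norm on roots of unity is $\zeta\mapsto\zeta\cdot\zeta^\sigma=\zeta^{p+1}$ (Frobenius acting as the $p$-power map on Teichm\"uller representatives), whose kernel is the cyclic group $\mu_{p+1}\cong\Z/(p+1)\Z=\ovcircle$; while on principal units the logarithm turns the norm into the trace $\mathrm{Tr}_{K/\Qp}\colon\mathcal{O}_K\cong(\Zp)^2\to\Zp$, whose kernel is a free rank-one $\Zp$-module, again giving a factor $\cong\Zp$ with $d=1$. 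The ramified case $p=2$ proceeds identically with $K=\Q_2(\ii)$ now ramified: the prime-to-$2$ roots of unity are trivial but $\{\pm 1,\pm\ii\}$ survive with norm one, giving $\ovcircle=\Z/4\Z$, and the only delicate point is that $\exp$ and $\log$ converge on $4\Z_2$ rather than $2\Z_2$, which is exactly what produces $d=2$.

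Finally, for the ``moreover'' clause I would note that the $p$-adic power series $\cos t$ and $\sin t$ have the same radius of convergence as $\exp(\ii t)=\cos t+\ii\sin t$, namely $|t|\le p^{-d}$ (so $p^d\Zp$, with $d=1$ for $p$ odd and $d=2$ for $p=2$ being the convergence domain of $\exp$); that $t\mapsto(\cos t,\sin t)$ lands in $\Circle$ because $\cos^2 t+\sin^2 t=1$ as a formal identity coming from $\exp(\ii t)\exp(-\ii t)=1$; and that it is a homomorphism by the addition formulas, which likewise hold formally and reproduce exactly the complex-multiplication group law. Identifying this image with the principal-unit factor found above pins down the embedded $p^d\Zp$. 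The main obstacle I expect is the bookkeeping at $p=2$, where ramification makes the decomposition of $\mathcal{O}_K^\times$ less automatic and one must track the convergence of $\log$ and $\exp$ carefully both to obtain $d=2$ and to confirm that no $2$-torsion beyond $\{\pm 1,\pm\ii\}$ appears.
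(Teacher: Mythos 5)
The paper does not actually prove this proposition: it is imported verbatim from \cite[Corollary 4.6]{CrePel-JC}, so there is no in-paper argument to compare yours against. Judged on its own, your route --- identifying $\Circle$ with the norm-one subgroup of the quadratic \'etale algebra $\Qp[\ii]$ and then splitting into the cases where $\Qp[\ii]$ is split, unramified, or ramified --- is the standard derivation and is essentially correct. The split case is clean: under $a+b\ii\mapsto(a+sb,a-sb)$ the element $(\cos t,\sin t)$ goes to $(\exp(st),\exp(-st))$, so the principal-unit factor $1+p\Zp$ of $\Qp^\times$ is exactly the image of $t\mapsto(\cos t,\sin t)$ on $p\Zp$, as the ``moreover'' clause requires. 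In the inert case your argument is also complete, because the norm respects the direct decomposition $\mathcal{O}_K^\times\cong\mu_{p^2-1}\times(1+p\mathcal{O}_K)$ and $\mu_{p-1}\cap(1+p\Zp)=\{1\}$, so the kernel is the product of the two kernels you compute. The one place where your sketch genuinely stops short of a proof is $p=2$, and for a reason slightly beyond convergence bookkeeping: there $\mathcal{O}_K^\times$ equals $1+\mathfrak{m}_K$ with $\mathfrak{m}_K=(1+\ii)$, the torsion $\mu_4$ is not a complement to a domain of $\log$, and both $\ker N$ and your candidate subgroup $\mu_4\cdot\exp(4\ii\Z_2)$ are abstractly $\Z/4\Z\times\Z_2$; a closed subgroup of that shape can sit with finite index $>1$ inside another, so you must still show the inclusion is an equality (e.g.\ by computing $N(\mathcal{O}_K^\times)=1+4\Z_2$, deducing $\ker N\cong\Z/4\Z\times\Z_2$, and checking that every norm-one unit congruent to $1$ modulo a high power of $\mathfrak{m}_K$ lies in $\exp(4\ii\Z_2)$). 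You flag this as the delicate point, correctly, but as written it is an acknowledged gap rather than a finished step.
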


\begin{corollary}\label{cor:structure-torus}
	Let $k$ be a positive integer. \letpprime. The $k$-dimensional $p$-adic torus is isomorphic to
	\begin{enumerate}
		\item $(\Qp^*)^k$, if $p\equiv 1\mod 4$;
		\item $((\Z/(p+1)\Z)\times p\Zp)^k$, if $p\equiv 3\mod 4$;
		\item $((\Z/4\Z)\times 4\Z_2)^k$, if $p=2$.
	\end{enumerate}
\end{corollary}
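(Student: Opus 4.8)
The plan is to bootstrap the single-circle decomposition of Proposition \ref{prop:structure-circle} up to the $k$-dimensional torus and then to recognize the resulting abstract group in each of the three cases. Since by definition the torus is $(\Circle)^k$ and direct products are associative and commutative, applying the isomorphism $\Circle \cong \hatcircle \times \ovcircle \times p^d\Zp$ coordinatewise gives an isomorphism of topological groups
\[(\Circle)^k \cong \big(\hatcircle \times \ovcircle \times p^d\Zp\big)^k.\]
So all that remains is to identify the single factor $\hatcircle \times \ovcircle \times p^d\Zp$ with the named group in each case, and then raise to the $k$-th power.

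In the cases $p\equiv 3\bmod 4$ and $p=2$ the factor $\hatcircle$ is trivial, so the decomposition collapses at once. For $p\equiv 3\bmod 4$ we have $d=1$, hence $\Circle\cong\Z/(p+1)\Z\times p\Zp$, and the $k$-th power is exactly case (2). For $p=2$ we have $d=2$, hence $\Circle\cong\Z/4\Z\times 4\Z_2$, and the $k$-th power is exactly case (3). No further argument is needed in these two cases.

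The only case that calls for a genuine identification is $p\equiv 1\bmod 4$, where Proposition \ref{prop:structure-circle} yields $\Circle\cong\Z\times\Z/(p-1)\Z\times p\Zp$. Here I would invoke the standard structure theorem for the multiplicative group of $\Qp$: every nonzero $p$-adic number factors uniquely as $p^n u$ with $n\in\Z$ and $u\in\Zp^*$, so the $p$-adic valuation gives $\Qp^*\cong\Z\times\Zp^*$; and for odd $p$ the unit group splits as $\Zp^*\cong\mu_{p-1}\times(1+p\Zp)$, where $\mu_{p-1}$ is the group of $(p-1)$-th roots of unity (the Teichm\"uller representatives), isomorphic to $\Z/(p-1)\Z$, while the principal units $1+p\Zp$ form a procyclic pro-$p$ group isomorphic to the additive group $\Zp$, and hence to $p\Zp$. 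Chaining these isomorphisms gives $\Qp^*\cong\Z\times\Z/(p-1)\Z\times p\Zp\cong\Circle$, and the $k$-th power is case (1).

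The main obstacle, such as it is, is the identification of the principal units $1+p\Zp$ with $p\Zp$ in the case $p\equiv 1\bmod 4$; this is precisely where oddness of $p$ is used, since for $p=2$ the group $1+2\Z_2$ carries $2$-torsion and this clean splitting fails, which is exactly why the circle looks different at $p=2$. I would supply this identification via the $p$-adic logarithm, which for odd $p$ restricts to an isomorphism of topological groups $1+p\Zp\cong p\Zp$ with inverse the $p$-adic exponential. All the remaining steps are formal manipulations of direct products, so the substance of the proof is concentrated in this one local-field computation.
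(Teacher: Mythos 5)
Your proof is correct and is essentially the argument the paper intends: the corollary is deduced from Proposition \ref{prop:structure-circle} by taking $k$-th powers, with the only nontrivial point being the identification $\Z\times\Z/(p-1)\Z\times p\Zp\cong\Qp^*$ in the case $p\equiv 1\bmod 4$, which you correctly supply via the standard splitting $\Qp^*\cong\Z\times\mu_{p-1}\times(1+p\Zp)$ and the $p$-adic logarithm. (As a minor remark, one can also see case (1) directly by choosing $\ii\in\Qp$ with $\ii^2=-1$ and sending $(a,b)\mapsto a+\ii b$, but your factor-by-factor matching against Proposition \ref{prop:structure-circle} is equally valid.)
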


If $p\equiv 1\mod 4$, the $p$-adic torus is isomorphic to $(\Qp^*)^k$, and hence non-compact. This is an inconvenience for defining toric manifolds. Indeed, consider an action of the $p$-adic torus on a compact manifold $M$. Let $m\in M$. The group $(\Qp^*)^k$ contains a discrete subgroup isomorphic to $\Z$, whose action on $m$ produces a sequence of points in $M$. Since $M$ is compact, this sequence contains a convergent subsequence, but $\Z$ contains no convergent subsequence, hence the action is not proper. That is to say, all actions of a $p$-adic torus on a compact manifold are non-proper if $p\equiv 1\mod 4$. To avoid this, we will restrict the action to the factors $\ovcircle\times p^d\Zp$.

\begin{definition}\label{def:G}
	Let $k$ be a positive integer. \letpprime. We denote by $\G$ the compact Lie group $\ovcircle\times p^d\Zp$, identified with a subgroup of $\Circle$. The \emph{compact $k$-dimensional $p$-adic torus} is given by $(\G)^k$.
\end{definition}

\begin{remark}
	In \cite{CrePel-nonsqueezing} we stated that the rotational action of the $p$-adic circle on the $p$-adic plane did not leave the ball $(\Zp)^2$ invariant and defined $\G$ as the largest subgroup of $\Circle$ which did leave it invariant. This is the same $\G$ as in Definition \ref{def:G}.
\end{remark}

\subsection{$p$-adic symplectic toric manifolds: definition}

The (real analog of the) following definition gives a fundamental class of Hamiltonian group actions, and we expect they will also play an important role in the $p$-adic category.

\begin{definition}[$p$-adic analytic symplectic toric manifold]
	Let $k$ be a positive integer. \letpprime. Let $(M,\omega)$ be a $p$-adic analytic symplectic manifold of dimension $2k$. Suppose that $(M,\omega)$ is endowed with a $p$-adic analytic effective symplectic Hamiltonian action of $(\G)^k$. The tuple $(M,\omega,\psi)$ is called a \emph{$2k$-dimensional $p$-adic analytic symplectic toric manifold}.
\end{definition}

For a Hamiltonian torus action in the real case, apart from translations, affine integral transformations in the image of $\mu$ can be achieved by a weak isomorphism in the torus action. The following is the $p$-adic equivalent. We refer to Appendix \ref{sec:appendix} for the notion of weakly isomorphic actions.

\begin{proposition}\label{prop:transformation}
	Let $k$ be a positive integer. \letpprime. Let $(M,\omega)$ be a $2k$-dimensional $p$-adic analytic symplectic manifold endowed with a $p$-adic symplectic Lie group action of the compact $k$-dimensional torus $\psi:(\G)^k\times M\to M$ with momentum map $\mu:M\to\g^*$, where $\g$ is the Lie algebra of $(\G)^k$. Let $A\in\GL(k,\Zp)$. Then there exists a $p$-adic symplectic action of the compact $k$-dimensional torus $\psi':(\G)^k\times M\to M$ weakly isomorphic to $\psi:(\G)^k\times M\to M$ whose momentum map is $A\mu:M\to\g^*$.
\end{proposition}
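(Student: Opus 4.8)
The plan is to realize the transformation of the momentum map as the effect of precomposing $\psi$ with an automorphism of the acting group. Concretely, I would attach to the matrix $A$ an automorphism $\Lambda_A$ of the compact torus $(\G)^k$ and set
\[\psi'(g,m)=\psi(\Lambda_A(g),m),\qquad (g,m)\in(\G)^k\times M.\]
Since $\Lambda_A$ is a group automorphism, $\psi'$ is again an action (the cocycle identity and $\psi'(\mathbf{1},m)=m$ follow at once), it is analytic because $\Lambda_A$ is, and it is symplectic because $\psi'(g,\cdot)^*\omega=\psi(\Lambda_A(g),\cdot)^*\omega=\omega$. As $\psi'$ differs from $\psi$ only by precomposition with a torus automorphism (together with the identity symplectomorphism of $M$), it is weakly isomorphic to $\psi$ in the sense recalled in Appendix \ref{sec:appendix}. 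The remaining work is to build $\Lambda_A$ so that the induced momentum map is exactly $A\mu$.

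Building $\Lambda_A$ is where the hypothesis $A\in\GL(k,\Zp)$ is used, and it is the main obstacle. I would exploit the product structure $(\G)^k=(\ovcircle)^k\times(p^d\Zp)^k$ coming from Definition \ref{def:G}: define $\Lambda_A$ to be the identity on the finite factor $(\ovcircle)^k$ and, on the factor $(p^d\Zp)^k$, the linear map $t\mapsto A\tr t$ in the coordinates $t=(t_1,\ldots,t_k)$. The key point is that $A\tr\in\GL(k,\Zp)$ maps the lattice $(p^d\Zp)^k$ bijectively onto itself: its entries lie in $\Zp$, so $A\tr(p^d\Zp)^k\subseteq(p^d\Zp)^k$, and $\det A\tr\in\Zp^*$ guarantees that $(A\tr)^{-1}$ again has $\Zp$ entries, yielding the reverse inclusion. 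It is precisely here that membership in $\GL(k,\Zp)$ rather than $\GL(k,\Qp)$ is essential, since a general $\Qp$-linear automorphism would not preserve $(p^d\Zp)^k$. Being the direct product of an automorphism of each factor, $\Lambda_A$ is a well-defined analytic automorphism of $(\G)^k$.

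With $\Lambda_A$ in hand I would compute the infinitesimal generators and the momentum map. Since $\psi'(\cdot,m)=\psi(\cdot,m)\circ\Lambda_A$ and $\Lambda_A(\mathbf{1})=\mathbf{1}$, the chain rule applied to the formula $\mathrm{X}_\psi(\xi)(m)=\psi(\cdot,m)_*(\xi)$ gives
\[\mathrm{X}_{\psi'}(\xi)(m)=\psi(\cdot,m)_*\big((\dd\Lambda_A)(\xi)\big)=\mathrm{X}_\psi\big((\dd\Lambda_A)(\xi)\big)(m),\]
where $\dd\Lambda_A:\g\to\g$ is the differential at the identity. Because $\g\cong(\Qp)^k$ is the Lie algebra of the $(p^d\Zp)^k$-factor (Proposition \ref{prop:lie}) and $\Lambda_A$ acts linearly there, one has $\dd\Lambda_A=A\tr$. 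Hence $\omega(\mathrm{X}_{\psi'}(\xi),\cdot)=\omega(\mathrm{X}_\psi(A\tr\xi),\cdot)=\dd\mu_{A\tr\xi}$, so a momentum map $\mu'$ for $\psi'$ is given by $\mu'_\xi=\mu_{A\tr\xi}$. Unwinding the pairing, $\langle\mu'(m),\xi\rangle=\langle\mu(m),A\tr\xi\rangle=\langle A\mu(m),\xi\rangle$, that is $\mu'=A\mu$, as claimed.

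I expect the only genuinely delicate steps to be the construction of $\Lambda_A$ — verifying that it is a well-defined analytic automorphism of $(\G)^k$, which rests on $A\in\GL(k,\Zp)$ preserving the lattice $(p^d\Zp)^k$ — and the bookkeeping of duals: one must act by $A\tr$ on the Lie algebra so that the induced map on $\g^*$ is $A$ and the momentum map emerges as $A\mu$ rather than $A\tr\mu$. Everything else (that $\psi'$ is a symplectic action and that it is weakly isomorphic to $\psi$) is formal.
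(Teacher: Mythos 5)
Your proposal is correct and follows essentially the same route as the paper: both define $\psi'$ by precomposing $\psi$ with the automorphism of $(\G)^k$ that fixes the finite factor $(\ovcircle)^k$ and acts on the $(p^d\Zp)^k$ factor by $A\tr$ (the paper writes this as $(t_1,\ldots,t_k)\mapsto(t_1,\ldots,t_k)A$, which is the same map), and both then compute $\mathrm{X}_{\psi'}(\xi)=\mathrm{X}_\psi(A\tr\xi)$ and unwind the pairing to get $A\mu$. Your explicit verification that $A\in\GL(k,\Zp)$ preserves the lattice $(p^d\Zp)^k$ is a detail the paper leaves implicit, but it is the same argument.
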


\begin{proof}
	We can write an element $g\in(\G)^k$ as \[g=g_1\cdot(\cos t_1,\sin t_1,\ldots,\cos t_k,\sin t_k),\] where $g_1\in\ovcircle$ and $t_1,\ldots,t_k\in p^d\Zp$. Then we define \[(t_1',\ldots,t_k')=(t_1,\ldots,t_k)A,\] \[\phi(g)=g_1\cdot(\cos t_1',\sin t_1',\ldots,\cos t_k',\sin t_k'),\] and finally $\psi'(g,m)=\psi(\phi(g),m)$.
	
	We can see that \[\mathrm{X}_{\psi'}(\xi)(m)=\psi'(\cdot,m)_*(\xi)=\psi(\cdot,m)_*(\phi_*(\xi))=\psi(\cdot,m)_*(A\tr\xi)=\mathrm{X}_\psi(A\tr\xi)(m).\] This leads to
	\[\omega(\mathrm{X}_{\psi'}(\xi),\cdot)=\omega(\mathrm{X}_\psi(A\tr\xi),\cdot)=\dd\langle\mu,A\tr\xi\rangle=\dd\langle A\mu,\xi\rangle,\]
	as we wanted.
\end{proof}

Hence, the momentum map of a $p$-adic analytic action can be considered up to piecewise constant functions and integral linear transformations (in the same way as, in the real case, the momentum maps are considered up to integral affine transformations).

Unlike in the real case \cite{Delzant}, there is not yet a classification of $p$-adic symplectic toric manifolds (there is not even an analog of the convex polytope of Atiyah \cite{Atiyah}, Kostant \cite{Kostant} and Guillemin-Sternberg \cite{GuiSte} in the real case).

\subsection{Results on $p$-adic symplectic toric actions}

Now we present some results about the classification of $p$-adic symplectic toric manifolds.

\begin{proposition}\label{prop:action}
	\letnpos. \letpprime. Let $(M,\omega)$ be a paracompact $2n$-dimensional $p$-adic symplectic manifold. There exists an effective and Hamiltonian $p$-adic analytic symplectic action of the compact $n$-dimensional $p$-adic torus on $(M,\omega)$.
\end{proposition}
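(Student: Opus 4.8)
The plan is to build the action by hand on a disjoint decomposition of $M$ into symplectic balls and then patch, the point being that the proposition only asserts existence, so the action may be defined differently on each ball. First I would use paracompactness together with \cite[Corollary 3.2]{CrePel-JC} and the $p$-adic Darboux lemma \cite[Lemma 6.1]{CrePel-Darboux} to write $M=\bigsqcup_{i\in I}B_i$ as a disjoint union of symplectic balls, so that on each $B_i$ there is a symplectomorphism $\phi_i$ onto an origin-centered model ball of the form $(p^{c_i}\Zp)^{2n}$ carrying $\omega$ to the standard form $\sum_{j=1}^n\dd x_j\wedge\dd y_j$ (one may recenter at the origin by a translation, which preserves the constant-coefficient form). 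On the model I would use the componentwise rotation action of $(\G)^n$: the $j$-th factor $\G\subset\Circle$ acts on the plane $(x_j,y_j)$ via $(x_j,y_j)\mapsto(x_j\cos t-y_j\sin t,\,x_j\sin t+y_j\cos t)$ for $(\cos t,\sin t)\in\G$. The crucial point is that, by the maximality property of $\G$ recalled after Definition \ref{def:G}, each such rotation preserves $(\Zp)^2$, and being linear it preserves every rescaling $(p^{c_i}\Zp)^2$; thus this is a genuine $p$-adic analytic symplectic action of $(\G)^n$ leaving each model ball invariant.

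Next I would transport this to $M$: for $g\in(\G)^n$ and $m\in B_i$ set $\psi(g,m)=\phi_i^{-1}\!\big(g\cdot\phi_i(m)\big)$, and define $\psi$ globally by using, for each $m$, the unique ball containing it. Since the $B_i$ are pairwise disjoint open sets each invariant under its transported action, $\psi$ is well defined and requires no gluing conditions across balls; it is $p$-adic analytic because it is analytic on each open piece $(\G)^n\times B_i$ of the disjoint decomposition of $(\G)^n\times M$, and it is symplectic since $\phi_i$ is a symplectomorphism and the rotation action preserves the standard form. Effectiveness is immediate: on a single model ball a nontrivial $g$ already moves the point with $x_j=p^{c_i}$ in the offending factor, so any $g\neq\mathbf{1}$ acts nontrivially on some $B_i$, hence on $M$.

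It remains to see that $\psi$ is Hamiltonian, and there are two equivalent routes. Directly, I would patch the local momentum maps: on $B_i$, in the Darboux coordinates, set $\mu_{\xi_j}=-\frac{1}{2}(x_j^2+y_j^2)$ for the basis $\{\xi_1,\ldots,\xi_n\}$ of $\g$ provided by Proposition \ref{prop:lie} (the sign being dictated by the convention \eqref{eq:hamilton}). Each $\mu_{\xi_j}$ is analytic on the ball, hence globally, satisfies $\omega(\mathrm{X}_\psi(\xi_j),\cdot)=\dd\mu_{\xi_j}$ by the computation $\mathrm{X}_\psi(\xi_j)=-y_j\,\partial/\partial x_j+x_j\,\partial/\partial y_j$, and is constant on orbits because rotations preserve $x_j^2+y_j^2$; by Lemma \ref{lemma:coordinates} the $\mu_{\xi_j}$ assemble into a momentum map, and constancy on orbits is exactly the Hamiltonian condition for an Abelian group. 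Alternatively one invokes Theorem \ref{thm:hamiltonian}: $(\G)^n$ is compact, so the action is proper, it is Abelian, and its orbits are isotropic, since $\mathrm{X}_\psi(\xi_j)$ only involves the coordinates $(x_j,y_j)$, whence $\omega(\mathrm{X}_\psi(\xi_i),\mathrm{X}_\psi(\xi_j))=0$ for $i\neq j$ and the diagonal terms vanish by antisymmetry.

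The main obstacle, and the only place where the $p$-adic nature genuinely enters, is the invariance of the model ball under the torus: the construction succeeds precisely because $\G$ is the \emph{maximal} subgroup of $\Circle$ preserving $(\Zp)^2$, so that the transported action does not leak out of each $B_i$ and the piecewise definition closes up into a bona fide global action. Everything else—well-definedness, analyticity, effectiveness, and the momentum map—then follows formally from disjointness of the balls and the cited Darboux decomposition, so I expect the write-up to be short once the model action and its invariance are in place.
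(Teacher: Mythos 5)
Your proposal is correct and follows essentially the same route as the paper: decompose $M$ into disjoint symplectic balls via \cite[Lemma 6.1]{CrePel-Darboux} and put the rotation action of $(\G)^n$ from Example \ref{ex:oscillator} on each ball, with momentum map $\pm\frac{1}{2}(x_j^2+y_j^2)$ per factor. Your write-up merely makes explicit the details (invariance of rescaled balls under the linear $\G$-action, effectiveness, patching of momentum maps) that the paper's one-line proof leaves implicit.
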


\begin{proof}
	This is a consequence of a previous result \cite[Lemma 6.1]{CrePel-Darboux} which states that a paracompact $p$-adic analytic manifold is a disjoint union of symplectic balls. We take the rotation action of the compact $n$-dimensional $p$-adic torus on each $2n$-dimensional $p$-adic symplectic ball, as in Example \ref{ex:oscillator}; this gives us an effective and Hamiltonian action of the compact $n$-dimensional $p$-adic torus on the manifold.
\end{proof}

Regarding the classification of these manifolds, they have some invariants which are preserved by weak isomorphisms, that is, manifolds with different values of these invariants must belong to different classes, both modulo isomorphisms and modulo weak isomorphisms. Three of them are the volume of the manifold (which is preserved by any symplectomorphism, in particular by a weak isomorphism of symplectic toric manifolds), whether it is compact (also preserved by any symplectomorphism), and the number of fixed points (because the image of a fixed point by a weak isomorphism must be a fixed point). In the $p$-adic case, it turns out that there are many possible values for the number of fixed points, even after fixing the manifold.

\begin{proposition}\label{prop:many-actions}
	\letnpos. \letpprime. Let $(M,\omega)$ be a paracompact $2n$-dimensional $p$-adic symplectic manifold. There exist infinitely many $k\in\Z$ with $k\ge 0$ such that there exists an effective and Hamiltonian action of the compact $n$-dimensional $p$-adic torus on $(M,\omega)$ with exactly $k$ fixed points.
\end{proposition}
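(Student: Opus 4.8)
The plan is to decompose $M$ into disjoint symplectic balls and place on them model actions of $(\G)^n$ whose fixed points are easy to count. As in the proof of Proposition \ref{prop:action}, \cite[Lemma 6.1]{CrePel-Darboux} lets me write $M=\bigsqcup_{i\in I}B_i$ with each $B_i$ analytically symplectomorphic to the standard ball $(\Zp)^{2n}$ carrying $\omega=\sum_{j=1}^n\dd x_j\wedge\dd y_j$. Placing the rotation action of Example \ref{ex:oscillator} on one ball contributes exactly one fixed point, its centre, because a point is torus-fixed there precisely when every coordinate pair $(x_j,y_j)$ vanishes. Since the balls are clopen, disjoint and individually invariant, the per-ball actions glue to a global $p$-adic analytic effective Hamiltonian action of $(\G)^n$ on $M$, the local momentum maps gluing across the clopen pieces, and its number of fixed points equals the number of balls. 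When $M$ is compact the index set $I$ is finite, and subdividing a single ball into its $p^{2n}$ sub-balls of radius $1/p$ (each again carrying the rotation action about its own centre) raises the number of balls, hence of fixed points, by $p^{2n}-1$; iterating gives an action with exactly $N_0+t(p^{2n}-1)$ fixed points for every $t\ge 0$, an infinite set of admissible values of $k$.

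For the non-compact case of ``paracompact'' this is not enough, since then every decomposition uses infinitely many balls and the rotation construction always has infinitely many fixed points. Here I would introduce a second model block with no fixed points. On a cluster of $|\ovcircle|^n$ balls indexed by $(\ovcircle)^n$, let the $j$-th finite factor $\ovcircle$ of $(\G)^n$ cyclically permute the balls in the $j$-th slot while the $j$-th continuous factor $p^d\Zp$ translates the $x_j$-coordinate within each ball. This action is $p$-adic analytic, symplectic, admits the orbit-constant momentum map $\mu=(y_1,\dots,y_n)$ up to normalisation, and is effective and fixed-point free because $\ovcircle$ permutes $|\ovcircle|\ge 4$ balls freely. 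Partitioning a ball decomposition of $M$ into $k$ rotation balls and the remaining balls into disjoint free clusters of size $|\ovcircle|^n$ then yields a global effective Hamiltonian action with exactly $k$ fixed points; as the remaining balls are infinite in number they can always be grouped into such clusters, so every $k\ge 0$ occurs.

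The step I expect to be the main obstacle is the construction of this fixed-point-free block, because the obvious attempt --- a translation action on a single ball --- cannot work: the additive group $\Zp$ is torsion-free, so the finite factor $\ovcircle$ has no effective translation action on one ball (and for $p=2$ a Smith-type argument suggests any effective symplectic $\ovcircle$-action on a ball must fix a point). The device that circumvents this is to make $\ovcircle$ act by \emph{permuting} several disjoint balls rather than acting within one, which is free exactly because $|\ovcircle|>1$. The work then is to verify that this permutation-plus-translation action is a genuine analytic $(\G)^n$-action (immediate, as $\ovcircle$ is discrete and the within-ball translation is polynomial) preserving $\omega$ and constant on orbits for $\mu=(y_1,\dots,y_n)$, together with the minor combinatorial point of arranging, by refining finitely many balls, that the non-rotation balls group into clusters of size exactly $|\ovcircle|^n$.
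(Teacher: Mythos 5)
Your first paragraph is exactly the paper's proof: decompose $M$ into disjoint symplectic balls via \cite[Lemma 6.1]{CrePel-Darboux}, put the rotation action of Example \ref{ex:oscillator} on each ball to get one fixed point per ball, and vary the count by subdividing a ball into $p^{2n}$ smaller symplectic balls, which adds $p^{2n}-1$ fixed points; the paper stops there. Where you genuinely diverge is in flagging that this only produces a finite fixed-point count when the decomposition is finite, i.e.\ when $M$ is compact: for non-compact paracompact $M$ every disjoint ball decomposition is infinite, so the all-rotation action has infinitely many fixed points and the subdivision trick does not change that cardinality. This is a real issue with the paper's argument as written, not an artifact of your reading, and your fix is sound: since the finite factor $\ovcircle$ of $\G$ is a nontrivial finite group, letting it act by freely permuting a cluster of $|\ovcircle|^n$ mutually symplectomorphic balls while the continuous factor $(p^d\Zp)^n$ translates the $x$-coordinates gives an effective, analytic, symplectic, fixed-point-free action with orbit-constant momentum map $(y_1,\dots,y_n)$, and because the pieces are clopen these blocks glue with finitely many rotation balls to give exactly $k$ fixed points. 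The remaining combinatorial step you defer --- regrouping the infinitely many leftover balls into clusters of equal-radius balls of size exactly $|\ovcircle|^n$ --- does work, but deserves a sentence: process the balls one at a time, subdividing the current leftover (fewer than $|\ovcircle|^n$ balls) together with the next ball down to a common radius, forming clusters greedily, and choosing the new leftover inside the newest ball (possible since $p^{2n}\ge|\ovcircle|^n$), so that every point is eventually clustered. In short: your approach subsumes the paper's and repairs its non-compact case at the cost of one extra model action; the paper's version is shorter but, read literally, only proves the statement for compact $M$.
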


\begin{proof}
	We first observe that the strategy in the proof of Proposition \ref{prop:action} creates exactly one fixed point in each one of the symplectic balls in which the manifold is divided. Then, in order to vary the number of fixed points, we need to divide the manifold in a different number of symplectic balls. We can do this by dividing any of the balls into $p^{2n}$ symplectic balls with the next smaller radius, which increases the number of balls (and, consequently, that of fixed points) by $p^{2n}-1$. By repeating the construction, we can reach infinitely many different numbers of fixed points, as we wanted.
\end{proof}

Proposition \ref{prop:many-actions} implies that a classification of $p$-adic symplectic toric manifolds should include many different classes, because the manifolds in each class will have a common volume and a common number of fixed points, and we would have infinitely many classes for each volume, with different number of fixed points.

\section{Examples of $p$-adic symplectic/Hamiltonian actions}\label{sec:examples}

Here we give some examples of symplectic and Hamiltonian actions, such as the translation action, the rotation action in the plane or the sphere, or the angular momentum, and make some comments about being Hamiltonian and the fixed points.

\begin{example}\label{ex:translation}
	The simplest example of a Hamiltonian action is the action of $(\Zp)^n$ on $(\Zp)^{2n}$ given by translation on the odd coordinates:
	\[\psi((a_1,\ldots,a_n),(x_1,y_1,\ldots,x_n,y_n))=(x_1+a_1,y_1,\ldots,x_n+a_n,y_n).\]
	Its momentum map is $\mu(x_1,y_1,\ldots,x_n,y_n)=(y_1,\ldots,y_n)$, whose image is $(\Zp)^n$. The action is strictly Hamiltonian: two points in $(\Zp)^{2n}$ are related by the action if and only if they have the same $y$ coordinates. See Figure \ref{fig:maps} for a representation.
\end{example}

\begin{figure}
	\includegraphics[width=\linewidth]{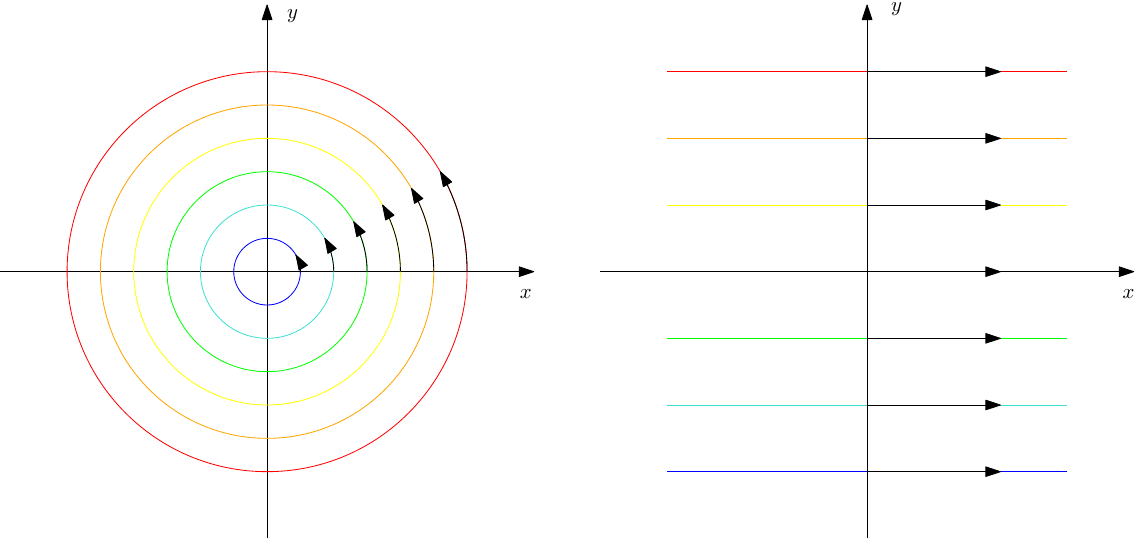}
	\caption{Level curves of the momentum maps of the rotation (left) and translation (right) actions in the plane. In the $p$-adic case like in the real case, the momentum map for the translation action is $y$ and for the rotation action it is $x^2+y^2$.}
	\label{fig:maps}
\end{figure}

\begin{example} \label{ex:oscillator}
	The rotational action of $\Circle$ on the $p$-adic plane $((\Qp)^2,\dd x\wedge\dd y)$ can be defined in the usual way:
	\[\psi((a,b),(x,y))=(ax+by,ay-bx).\]
	We have that $\mathrm{X}_\psi(1)=y\partial/\partial x-x\partial/\partial y$, which implies $\dd\mu(x,y)=x\dd x+y\dd y$ and \[\mu(x,y)=\frac{x^2+y^2}{2}\] (the action is sometimes defined with the opposite signs for $b$, in which case $\mu$ changes sign). The image of $\mu$ is all $\Qp$ if $p\equiv 1\mod 4$, the numbers with even $p$-adic order if $p\equiv 3\mod 4$, and the numbers with a digit $0$ at the left of the leading $1$ if $p=2$ (see \cite[Corollary 4.5]{CrePel-JC}). This is a Hamiltonian action, because $\mu$ is constant along each orbit. It is strictly Hamiltonian if and only if $p\not\equiv 1\mod 4$, because in that case two points with the same value of $\mu$ are related by the action of $\Circle$ (see \cite[Proposition 4.2]{CrePel-JC}).
	
	This action can be extended, by multiplication, to an action of the torus $(\Circle)^n$ on $(\Qp)^{2n}$. It does not restrict to an action on the ball $(\Zp)^{2n}$, but the action of the subgroup $(\G)^n$ restricts to the ball (in general, an action of a Lie subgroup of the same dimension has the same momentum map as the action of all the group). See Figure \ref{fig:maps} for a representation.
\end{example}

\begin{example}\label{ex:spin}
	The rotational action of $\Circle$ on the $p$-adic sphere $(\sphere,\omega)$, where
	\[\omega=-\frac{1}{x}\dd y\wedge\dd z=\frac{1}{y}\dd x\wedge\dd z=-\frac{1}{z}\dd x\wedge\dd y,\]
	can also be defined in the usual way (like the previous example, sometimes the signs for $b$ are the opposite):
	\[\psi((a,b),(x,y,z))=(ax+by,ay-bx,z).\]
	We have $\mathrm{X}_\psi(1)=y\partial/\partial x-x\partial/\partial y$, which implies $\dd\mu(x,y,z)=\dd z$ and \[\mu(x,y,z)=z.\] The image of $\mu$ is all $\Qp$ if $p\equiv 1\mod 4$; otherwise it is a rather complicated subset of $\Qp$, which in the case $p=2$ is contained in $\Zp$. Like the previous example, the action is always Hamiltonian, and it is strictly Hamiltonian if and only if $p\not\equiv 1\mod 4$. See \cite[section 5]{CrePel-JC} for more about this action.
\end{example}

\begin{figure}
	\includegraphics{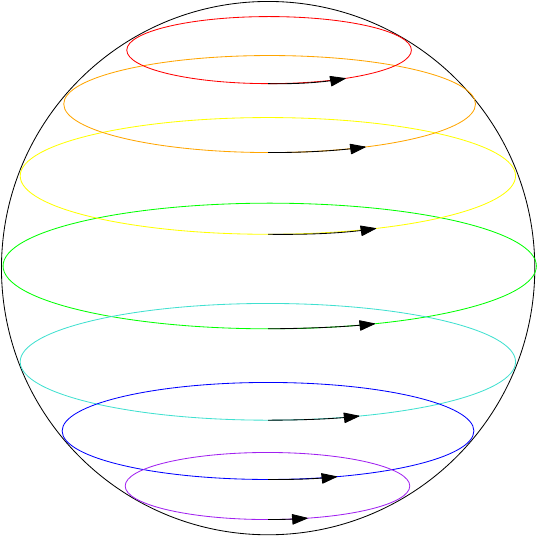}
	\caption{Level curves of the momentum map of the rotational action on the sphere, indicated by the arrows. The $p$-adic spin action is described in Example \ref{ex:spin}. Its momentum map, like in the real case, is $z$. See Figure \ref{fig:maps} for other examples of momentum maps.}
	\label{fig:sphere}
\end{figure}

\begin{example}\label{ex:composite}
	We can combine the actions in Examples \ref{ex:oscillator} and \ref{ex:spin} in order to obtain an action of $\Circle$ on $\sphere\times(\Qp)^2$ given by
	\[\psi((a,b),(x,y,z,u,v))=(ax-by,ay+bx,z,au-bv,av+bu).\]
	This leads to the momentum map
	\[\mu(x,y,z,u,v)=\frac{u^2+v^2}{2}+z.\]
	This function Poisson-commutes with $ux+vy$, resulting in the $p$-adic Jaynes-Cummings system which we studied in \cite{CrePel-JC}. The momentum map is surjective if $p\ne 2$. Another possibility is to combine the actions of $\Circle$ on two spheres: if we take $R_1\omega_1+R_2\omega_2$ as a symplectic form, where $\omega_1$ and $\omega_2$ are the forms in the two spheres, the momentum map of the action on the product is $R_1z_1+R_2z_2$, which is one of the components of the $p$-adic coupled angular momentum system \cite{CrePel-angmom}.
\end{example}

\begin{example}
	We now give an example of a Hamiltonian action of a group which is not Abelian. This is the $p$-adic analog of \cite[Example 5.3.1]{McDSal}. Consider the group $\mathrm{SO}(3,\Qp)$ of matrices $\Phi\in\M_3(\Qp)$ such that $\Phi\tr\Phi=I$ and $\det\Phi=1$. Its Lie algebra $\mathfrak{so}(3,\Qp)$ is given by the matrices $A\in\M_3(\Qp)$ such that $A+A\tr=0$. This vector space can be identified with $(\Qp)^3$ by the correspondence
	\[\xi=(\xi_1,\xi_2,\xi_3)\mapsto A_\xi=\begin{pmatrix}
		0 & -\xi_3 & \xi_2 \\
		\xi_3 & 0 & -\xi_1 \\
		-\xi_2 & \xi_1 & 0
	\end{pmatrix}.\]
	
	With this correspondence, the Lie bracket of $\mathfrak{so}(3,\Qp)$ becomes the cross product in $(\Qp)^3$: $A_{\xi\times\eta}=[A_\xi,A_\eta]$. We have also that the trace of $A_\xi\tr A_\eta$ is $2\langle\xi,\eta\rangle$, which means that the following diagram is commutative:
	\[\begin{array}{ccc}
		\mathfrak{so}(3,\Qp) & \xrightarrow{A\mapsto \frac{1}{2}\mathrm{trace}(A\tr\cdot)} & \mathfrak{so}(3,\Qp)^* \\
		\uparrow & & \downarrow \\
		(\Qp)^3 & \xrightarrow{\xi\mapsto \langle\xi,\cdot\rangle} & ((\Qp)^3)^*
	\end{array}\]
	where the up and down arrows are the correspondence $\xi\mapsto A_\xi$ and its dual. This allows us to identify $\mathfrak{so}(3,\Qp)^*$ also with $(\Qp)^3$. The adjoint action of $\Phi$ on $A_\xi$ gives $\Phi A_\xi\Phi^{-1}=A_{\Phi\xi}$, hence it becomes just a multiplicative action $\xi\mapsto\Phi\xi$ on $(\Qp)^3$, and the coadjoint action on $((\Qp)^3)^*$ is the dual of the action of $\Phi^{-1}$, that is, $\xi\mapsto(\Phi^{-1})\tr\xi=\Phi\xi$.
	
	We now define an action of $\mathrm{SO}(3,\Qp)$ on $(\Qp)^3\times(\Qp)^3$ by
	\[\psi(\Phi,(x,y))=(\Phi x,\Phi y).\]
	We take in $(\Qp)^3\times(\Qp)^3$ the symplectic form $\omega=\dd x_1\wedge\dd y_1+\dd x_2\wedge\dd y_2+\dd x_3\wedge\dd y_3$. Since $\Phi$ is orthogonal, its action preserves $\omega$:
	\[\psi(\Phi,\cdot)^*\omega=(\Phi\dd x)\tr\wedge\Phi\dd y=\dd x\tr\wedge\dd y=\omega.\]
	
	Let $A\in\mathfrak{so}(3,\Qp)$ and $\Phi:\Zp\to\mathrm{SO}(3,\Qp)$ such that $\Phi(0)=I$ and $\dot{\Phi}(0)=A$. Then
	\[\mathrm{X}_\psi(A)(x,y)=\left.\frac{\dd}{\dd t}(\Phi(t) x,\Phi(t) y)\right|_{t=0}=(Ax,Ay)\]
	and
	\[\omega(\mathrm{X}_\psi(A),\cdot)=\dd y\tr Ax-\dd x\tr Ay=\dd y\tr Ax+y\tr A\dd x=\dd\langle y,Ax\rangle.\]
	If $A=A_\xi$ for some $\xi\in(\Qp)^3$, we have $Ax=\xi\times x$ and
	\[\omega(\mathrm{X}_\psi(A_\xi),\cdot)=\dd\langle y,\xi\times x\rangle=\dd\langle x\times y,\xi\rangle.\]
	Hence we can take \[\mu(x,y)=x\times y.\] This action is Hamiltonian because
	\[\mu(\psi(\Phi,(x,y)))=\Phi x\times\Phi y=\Phi(x\times y)=\Ad_\Phi^*\mu(x,y),\]
	where the middle inequality holds because $\Phi$ is orthogonal of determinant $1$.
\end{example}

Unlike what happens in the real case, where the notions of weakly Hamiltonian action and Hamiltonian action are very similar, and in fact equivalent if $M$ is compact, in the $p$-adic case there are many weakly Hamiltonian actions which are not Hamiltonian, even if $M$ is compact.

\begin{example}\label{ex:translation2}
	Consider the action of $\Zp^2$ on $(\Zp^{2n},\sum_{i=1}^n\dd x_i\wedge\dd y_i)$, for all $n\ge 1$, by translation on the first two coordinates. The momentum map is given by \[\mu(x_1,y_1,\ldots,x_n,y_n)=(y_1,-x_1),\] which is not preserved by a translation. Actually the condition in Theorem \ref{thm:hamiltonian} is not satisfied:
	\[\omega(\mathrm{X}_\psi(\e_1),\mathrm{X}_\psi(\e_2))=\omega\left(\frac{\partial}{\partial x_1},\frac{\partial}{\partial y_1}\right)=1.\]
\end{example}

In contrast with the real case \cite{Frankel,McDuff}, there does not appear to be a relation between being Hamiltonian and having fixed points in the compact case, as the following two examples show.

\begin{example}\label{ex:translation1}
	The action in Example \ref{ex:translation} is Hamiltonian but it does not have fixed points. We can even achieve this for a group of dimension $1$: the action of $\Zp$ on the compact manifold $(\Zp)^{2n}$ by translation on the first coordinate $x_1$ is Hamiltonian, with momentum map $y_1$, but it does not have fixed points.
\end{example}

\begin{example}\label{ex:translation-partial}
	It is also possible to construct non-Hamiltonian symplectic actions on compact manifolds with fixed points: the action of $(p\Zp)^2$ on $(\Zp)^{2n}$ given by
	\[\psi((a,b),(x_1,y_1,\ldots,x_n,y_n))=\begin{cases}
		(x_1+a,y_1+b,x_2,y_2,\ldots,x_n,y_n) & \text{if } x_1,y_1\in p\Zp \\
		(x_1,y_1,\ldots,x_n,y_n) & \text{otherwise}
	\end{cases}\]
	has many fixed points and its momentum map is
	\[\mu(x_1,y_1,\ldots,x_n,y_n)=\begin{cases}
		(y_1,-x_1) & \text{if } x_1,y_1\in p\Zp \\
		0 & \text{otherwise,}
	\end{cases}\]
	which is not constant along any orbit different from a fixed point.
\end{example}

\section{A symplectic non-Hamiltonian $p$-adic action with isotropic orbits and proof of Theorem \ref{thm:non-hamiltonian}}\label{sec:counterexample}

The condition of being proper in Theorem \ref{thm:hamiltonian} cannot be removed. \letpprime. We now define a non-proper action of $\Qp$ on $(\Zp)^2$ which is not Hamiltonian, while the condition in Theorem \ref{thm:hamiltonian} always holds if $G$ has dimension $1$.

For $n\in\N$, let \[A_n=\Big\{0,\ldots,p^n-1\Big\}.\] We define a function $r_n:A_n\to A_n$ which reverses the digits of a number in base $p$:
\[a=\sum_{i=0}^{n-1} a_ip^i\Longrightarrow r(a)=\sum_{i=0}^{n-1}a_{n-1-i}p^i.\]
Let $S$ be the quotient of additive groups $\Qp/\Zp$. This quotient can be described as
\[S=\left\{\frac{a}{p^n}:n\in\N,a\in A_n\right\}\]
with the addition modulo $1$. An element $x\in\Qp$ can be written uniquely as $x=\lfloor x\rfloor+\{x\}$ where $\lfloor x\rfloor\in\Zp$ and $\{x\}\in S$. We define, for $n\in\N$ and $x\in \Zp$,
\[f_n(x)=p^n\{p^{-n}x\}\in A_n,\]
that is, $f_n(x)$ is the integer whose digits in base $p$ are the rightmost $n$ digits of $x$.

Now we define our action $\psi:\Qp\times(\Zp)^2\to(\Zp)^2$ as follows.
\begin{definition}\label{def:psi}
	Given $g\in\Qp$ and $(x,y)\in(\Zp)^2$, we first write $\{g\}$ as $a/p^n$ for some $n\in\N$ and $a\in A_n$. Let $b=f_n(y)$ and $c=a+r_n(b)$. If $c<p^n$ (that is, $c\in A_n$), we define
	\begin{equation}\label{eq:psi1}
		\psi(g,(x,y))=(x+\lfloor g\rfloor,y-b+r_n(c)).
	\end{equation}
	Otherwise, we must have $p^n\le c<2p^n$, and $c-p^n\in A_n$. We define
	\begin{equation}\label{eq:psi2}
		\psi(g,(x,y))=(x+\lfloor g\rfloor+1,y-b+r_n(c-p^n)).
	\end{equation}
	See Figure \ref{fig:counterexample} for a representation of this action.
\end{definition}

In order to prove that this $\psi$ is actually a symplectic Lie group action and that it is not Hamiltonian, we need some lemmas.

\begin{lemma}\label{lemma:psi-well-defined}
	The map $\psi$ in Definition \ref{def:psi} is well defined, that is, the result does not depend on the value of $n$ used.
\end{lemma}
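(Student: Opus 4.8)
The plan is to show that the definition of $\psi(g,(x,y))$ is independent of the particular representation $\{g\}=a/p^n$. The key observation is that the fractional part $\{g\}\in S=\Qp/\Zp$ has many representations: if $\{g\}=a/p^n$ with $a\in A_n$, then we can also write $\{g\}=a'/p^{n+1}$ where $a'=pa$, and more generally $\{g\}=a''/p^{n+j}$ with $a''=p^j a$. So it suffices to prove that using $n$ and using $n+1$ (with the corresponding $a'=pa$) give the same output, and then iterate.

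First I would fix $g$ and $(x,y)$ and compute both the $n$-version and the $(n+1)$-version side by side. For the $n$-version we set $b=f_n(y)$, $c=a+r_n(b)$. For the $(n+1)$-version we set $b'=f_{n+1}(y)$, $a'=pa$, and $c'=a'+r_{n+1}(b')$. The central task is to relate $r_{n+1}(b')$ to $r_n(b)$ and to track how the ``carry'' (the distinction between the cases $c<p^n$ and $c\ge p^n$ in Definition \ref{def:psi}) behaves consistently. Since $f_{n+1}(y)$ keeps one more digit of $y$ than $f_n(y)$, writing $b'=f_n(y)+d\cdot p^n$ where $d$ is the $n$-th digit of $y$, the digit-reversal $r_{n+1}$ sends that leading digit $d$ to the units place and shifts the reversed block of $b$ up by one position. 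I would carefully write $r_{n+1}(b')=p\cdot r_n(b)+d$ (checking the digit bookkeeping), and similarly analyze $c'=pa+p\,r_n(b)+d=p\,c+d$. Thus $c'=pc+d$, which makes the comparison of the branch conditions transparent: $c<p^n$ is equivalent to $pc<p^{n+1}$, i.e. $c'=pc+d<p^{n+1}$ precisely when $c<p^n$ (since $0\le d\le p-1$ forces $pc+d<p^{n+1}\iff c\le p^n-1\iff c<p^n$). So the two versions fall into matching branches, and the integer shift $\lfloor g\rfloor$ versus $\lfloor g\rfloor+1$ applied to the $x$-coordinate agrees.

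It then remains to check that the $y$-coordinate output agrees. In the $n$-version the new $y$ is $y-b+r_n(c)$ (or $y-b+r_n(c-p^n)$); in the $(n+1)$-version it is $y-b'+r_{n+1}(c')$ (or the carried variant). Using $b'=b+d\,p^n$ and $c'=pc+d$ together with the same digit-reversal identity applied to $c$ (namely $r_{n+1}(c')=r_{n+1}(pc+d)=p^n\cdot(\text{leading digit})+\dots$, which should recombine to $r_n(c)+d\,p^n$), one finds $-b'+r_{n+1}(c')=-b+r_n(c)$, so the two outputs coincide. The carried case $c\ge p^n$ is handled identically after replacing $c$ by $c-p^n$ and $c'$ by $c'-p^{n+1}$, noting $c'-p^{n+1}=p(c-p^n)+d$.

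The main obstacle will be the careful digit-by-digit bookkeeping in establishing the two reversal identities $r_{n+1}(pa+p\,r_n(b)+d)$ and $r_{n+1}(c')$ in terms of $r_n$; reversal does not interact simply with addition, so I must exploit the specific structure (appending a zero digit via multiplication by $p$, and prepending the digit $d$) rather than treating $r_n$ as additive. Once the identity $c'=pc+d$ and the compatibility of the branch conditions are in hand, everything else is a direct substitution, and the independence from $n$ follows by induction on the number of extra digits used in the representation.
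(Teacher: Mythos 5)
Your proposal is correct and follows essentially the same route as the paper: both hinge on the identities $r_{n'}(p^nd+b)=p^{n'-n}r_n(b)+r_{n'-n}(d)$ and $c'=p^{n'-n}c+r_{n'-n}(d)$, the matching of the two branches, and the cancellation $-b'+r_{n'}(c')=-b+r_n(c)$; the only difference is that the paper compares arbitrary $n<n'$ in a single computation while you compare $n$ with $n+1$ and iterate, which is an equivalent bookkeeping choice.
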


\begin{proof}
Suppose we use $n$ and $n'$ with $n<n'$. When we write $\{g\}$ as $a/p^n$ and as $a'/p^{n'}$, we have $a'=p^{n'-n}a$. The number $b=f_n(y)$ consists, by definition, of the last $n$ digits of $b'=f_{n'}(y)$: we write $b'=p^nd+b$ for $d\in A_{n'-n}$. Then
\begin{align*}
	c' & =a'+r_{n'}(b') \\
	& =p^{n'-n}a+r_{n'}(p^nd+b) \\
	& =p^{n'-n}a+p^{n'-n}r_n(b)+r_{n'-n}(d) \\
	& =p^{n'-n}c+r_{n'-n}(d).
\end{align*}
If $c\in A_n$, then also $c'\in A_{n'}$ and
\begin{align*}
	y-b'+r_{n'}(c') & =y-p^nd-b+r_{n'}(p^{n'-n}c+r_{n'-n}(d)) \\
	& =y-p^nd-b+p^nd+r_n(c) \\
	& =y-b+r_n(c).
\end{align*}
If $c\notin A_n$, then also $c'\notin A_{n'}$ and
\begin{align*}
	y-b'+r_{n'}(c'-p^{n'}) & =y-p^nd-b+r_{n'}(p^{n'-n}(c-p^n)+r_{n'-n}(d)) \\
	& =y-p^nd-b+p^nd+r_n(c-p^n) \\
	& =y-b+r_n(c-p^n).
\end{align*}
Hence the result is actually the same.
\end{proof}

\begin{definition}
	For each $n\in\N$, we define $\phi_n:(\Zp)^2\to\Zp$ as
	\[\phi_n(x,y)=p^nx+r_n(f_n(y)),\]
	that is, the result of appending to $x$ the rightmost $n$ digits of $y$ reversed.
\end{definition}

\begin{figure}
	\includegraphics{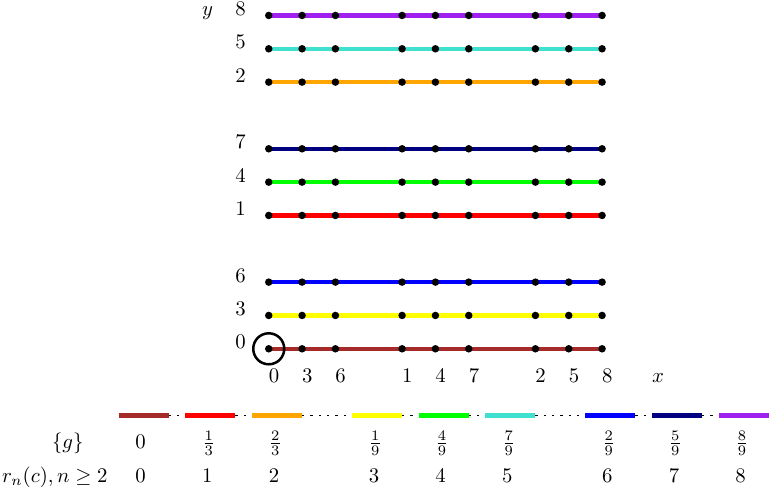}
	\caption{A representation of how $\psi(g,(0,0))$ varies with $g$. The points in the top part of the figure represent $(\Z_3)^2$, where each point is a ball of radius $1/9$. The circle indicates the initial point $(0,0)$. The nine segments in the bottom are nine balls of radius $1$ in $\Q_3$. When $g$ moves inside one of the segments, it changes the integer part while preserving the fractional part, and the effect on $\psi(g,(0,0))$ is moving it horizontally. The line along which it moves is determined by the value of $r_n(c)$, and in this case $c$ is the same as $a$ and depends on $\{g\}$; this line is indicated with the same color as the corresponding segment in the bottom part.}
	\label{fig:counterexample}
\end{figure}

\begin{lemma}\label{lemma:phi}
	For $x,y\in\Zp$ and $g\in p^{-n}\Zp$, $\phi_n(\psi(g,(x,y)))=p^ng+\phi_n(x,y)$.
\end{lemma}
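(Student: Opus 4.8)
The plan is to evaluate $\phi_n(\psi(g,(x,y)))$ by plugging the two defining cases of $\psi$ from Definition \ref{def:psi} directly into the formula $\phi_n(x,y)=p^nx+r_n(f_n(y))$, and to check that both cases collapse to the same expression, which will match $p^ng+\phi_n(x,y)$. First I would record the arithmetic meaning of the hypothesis $g\in p^{-n}\Zp$: the fractional part $\{g\}$ lies in $p^{-n}\Zp/\Zp$, so it can be written as $a/p^n$ with $a\in A_n$, and consequently $p^ng=p^n\lfloor g\rfloor+a$. By Lemma \ref{lemma:psi-well-defined} the value of $\psi(g,(x,y))$ does not depend on the representation chosen, so I may compute $\psi$ using precisely this $n$ and this $a$.

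The key elementary observation concerns how $r_n$ and $f_n$ interact with the second coordinate. Since $b=f_n(y)$ consists of the rightmost $n$ digits of $y$, the difference $y-b$ lies in $p^n\Zp$; hence for any $e\in A_n$ the number $y-b+e$ has exactly $e$ as its block of rightmost $n$ digits, so that $f_n(y-b+e)=e$. Together with the fact that $r_n$ is an involution on $A_n$ (reversing digits twice restores them), this lets me read off $r_n(f_n(y'))$ for the new second coordinate $y'$ in each case.

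Now I carry out the two cases. Writing $c=a+r_n(b)$, in the first case $c\in A_n$ and $y'=y-b+r_n(c)$, so the observation above gives $f_n(y')=r_n(c)$ and thus $r_n(f_n(y'))=c$; since the first coordinate is $x+\lfloor g\rfloor$, this yields $\phi_n(\psi(g,(x,y)))=p^n(x+\lfloor g\rfloor)+c$. In the second case $c\ge p^n$, the first coordinate acquires an extra $+1$ while $r_n(f_n(y'))=c-p^n$, and the two occurrences of $p^n$ cancel, producing the very same value $p^n(x+\lfloor g\rfloor)+c$. Finally, substituting $c=a+r_n(b)=a+r_n(f_n(y))$ gives
\[\phi_n(\psi(g,(x,y)))=p^nx+r_n(f_n(y))+p^n\lfloor g\rfloor+a=\phi_n(x,y)+p^n\lfloor g\rfloor+a,\]
and the identity $p^ng=p^n\lfloor g\rfloor+a$ from the first step finishes the proof.

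The computation itself is routine digit bookkeeping, so the only point requiring care is the two-case split: I expect the main (minor) obstacle to be verifying that the carry $+1$ appearing in the first coordinate in the second case exactly compensates the subtraction of $p^n$ hidden inside $r_n(c-p^n)$, so that both branches of the definition of $\psi$ return the same value of $\phi_n$. Once this cancellation is checked, no genuine difficulty remains.
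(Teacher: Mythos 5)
Your proposal is correct and follows essentially the same route as the paper's proof: the paper unifies your two cases with a parameter $d\in\{0,1\}$ but otherwise uses the identical key facts ($y-b\in p^n\Zp$ so $f_n(y-b+e)=e$ for $e\in A_n$, the involutivity of $r_n$, and the cancellation $p^nd - dp^n = 0$). No differences of substance.
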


\begin{proof}
	Since $g\in p^{-n}\Zp$, we can use this $n$ to compute $\psi(g,(x,y))$, and we have that
	\begin{align*}
		\phi_n(\psi(g,(x,y))) & =\phi_n(x+\lfloor g\rfloor+d,y-b+r_n(c-dp^n)) \\
		& =p^n(x+\lfloor g\rfloor+d)+r_n(f_n(y-b+r_n(c-dp^n))),
	\end{align*}
	where $d$ is $0$ or $1$ depending on which of the two equations \eqref{eq:psi1} or \eqref{eq:psi2} holds. By definition, $b$ consists on the last $n$ digits of $y$, hence $y-b$ ends in $n$ zeros, and
	\[f_n(y-b+r_n(c-dp^n))=r_n(c-dp^n),\]
	which implies
	\begin{align*}
		\phi_n(\psi(g,(x,y))) & =p^n(x+\lfloor g\rfloor+d)+r_n(r_n(c-dp^n)) \\
		& =p^n(x+\lfloor g\rfloor+d)+c-dp^n \\
		& =p^n(x+\lfloor g\rfloor)+c \\
		& =p^n(x+\lfloor g\rfloor)+a+r_n(b) \\
		& =p^n(x+\lfloor g\rfloor)+p^n\{g\}+r_n(f_n(y)) \\
		& =p^ng+\phi_n(x,y).\qedhere
	\end{align*}
\end{proof}

Now we prove that this action $\psi$ is what we need for Theorem \ref{thm:non-hamiltonian}. Actually, we will prove some more properties about this action.

\begin{theorem}\label{thm:psi-non-hamiltonian}
	The map $\psi:\Qp\times(\Zp)^2\to(\Zp)^2$ given in \eqref{eq:psi1} and \eqref{eq:psi2} satisfies the following conditions:
	\begin{itemize}
		\item $\psi$ is a symplectic Lie group action.
		\item $\psi$ is free, that is, the isotropy group of each point in $(\Zp)^2$ is trivial. In particular, $\psi$ does not have fixed points.
		\item $\psi$ is not proper.
		\item Two points $(x_1,y_1)$ and $(x_2,y_2)$ are in the same orbit of $\psi$ if and only if there exists $n\in\Z$ such that the digits of $y_1$ are the same than those of $y_2$ at the left of the position $n$.
		\item $\psi$ is not Hamiltonian.
	\end{itemize}
\end{theorem}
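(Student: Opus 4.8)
The plan is to make Lemma~\ref{lemma:phi} (together with the well-definedness Lemma~\ref{lemma:psi-well-defined}, which lets me freely enlarge $n$) the engine of the whole proof. Two elementary observations do most of the work. First, for $g\in p^{-n}\Zp$ the formulas \eqref{eq:psi1}--\eqref{eq:psi2} only alter the $x$-coordinate and the bottom $n$ digits of $y$, since $y$ is changed to $y-b+(\text{element of }A_n)$ with $b=f_n(y)\in A_n$; hence $\psi(g,\cdot)$ fixes every digit of $y$ in positions $\ge n$. Second, although $\phi_n$ is not injective on $(\Zp)^2$, it \emph{is} injective on each slice of points whose $y$-digits in positions $\ge n$ are prescribed: from $\phi_n(x,y)=p^nx+r_n(f_n(y))$ one reads off $x$ and $f_n(y)$, and $r_n$ is an involution. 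Combined with Lemma~\ref{lemma:phi}, which conjugates $\psi(g,\cdot)$ for $g\in p^{-n}\Zp$ to the translation $z\mapsto p^ng+z$ on $\Zp$, these facts drive every item.

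\textbf{Symplectic Lie group action.} I would check $\psi(0,m)=m$ directly (with $g=0$ one has $n=0$, $b=c=0$, so nothing moves). For the homomorphism law, fix $g_1,g_2$ and pick $n$ with $g_1,g_2\in p^{-n}\Zp$. By the first observation the points $\psi(g_1,\psi(g_2,m))$ and $\psi(g_1+g_2,m)$ lie in the same high-digit slice as $m$; applying Lemma~\ref{lemma:phi} twice gives $\phi_n(\psi(g_1,\psi(g_2,m)))=p^n(g_1+g_2)+\phi_n(m)=\phi_n(\psi(g_1+g_2,m))$, and injectivity on that slice forces the two points to coincide. For analyticity I would show that on each neighborhood $\{g\in g_0+\Zp\}\times\Zp\times\{y\in y_0+p^n\Zp\}$ (with $g_0\in p^{-n}\Zp$) the data $\{g\}$, $b$, $c$ and the branch $d\in\{0,1\}$ are constant, so $\psi$ collapses to the affine map $(g,x,y)\mapsto(x+\lfloor g\rfloor+d,\,y+\text{const})$, which is analytic; such neighborhoods cover the domain. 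The same local form shows that, for fixed $g$, $\psi(g,\cdot)$ is on each ball $y_0+p^n\Zp$ a translation of $(\Zp)^2$ with identity differential, whence $\psi(g,\cdot)^*\omega=\omega$ since preserving $\omega=\dd x\wedge\dd y$ is a local condition.

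\textbf{Freeness, orbit description, and non-properness.} Freeness is immediate from Lemma~\ref{lemma:phi}: if $\psi(g,(x,y))=(x,y)$, choose $n$ with $g\in p^{-n}\Zp$ and apply $\phi_n$ to get $p^ng=0$, i.e.\ $g=0$; in particular there are no fixed points. For the orbit criterion I would note that as $g$ ranges over $p^{-n}\Zp$ the quantity $p^ng$ ranges over all of $\Zp$, so by Lemma~\ref{lemma:phi} and the slice-injectivity the orbit of $(x,y)$ under $p^{-n}\Zp$ is exactly the set of points whose $y$-digits agree with those of $y$ in all positions $\ge n$; taking the union over $n$ (using $\Qp=\bigcup_n p^{-n}\Zp$) yields precisely the stated characterization. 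Non-properness is then a soft consequence of $\Qp$ being non-compact while $(\Zp)^2$ is compact: with $m_k=(0,0)$ and $g_k=p^{-k}$ we have $m_k\to(0,0)$, and after passing to a subsequence along which $\psi(g_k,(0,0))$ converges (possible by compactness), the sequence $g_k$ still has $|g_k|=p^{k}\to\infty$ and hence no convergent subsequence, contradicting properness.

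\textbf{Non-Hamiltonian (the crux).} For $g\in\Zp$ one computes $\psi(g,(x,y))=(x+g,y)$, so $\mathrm{X}_\psi(1)=\partial/\partial x$ and $\omega(\mathrm{X}_\psi(1),\cdot)=\dd y$; thus $\mu(x,y)=y$ is a momentum map, and since $\dim\g=1$ every orbit is automatically isotropic, so the hypothesis of Theorem~\ref{thm:hamiltonian} is met. To show $\psi$ is not Hamiltonian I argue by contradiction: by Proposition~\ref{prop:translation}(1) every momentum map has the form $y+h$ with $h:(\Zp)^2\to\Qp$ locally constant, and on the compact space $(\Zp)^2$ such an $h$ is \emph{uniformly} locally constant, i.e.\ $h$ depends only on the bottom $N$ digits of $x$ and $y$ for some fixed $N$. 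If $y+h$ were constant along orbits, evaluate it at $(0,0)$ and $(0,p^{N})$: these lie in one orbit (their $y$-coordinates agree in all positions $\ge N+1$) and share the same bottom $N$ digits of $x$ and $y$, so $h$ agrees on them; orbit-constancy then gives $0+h=p^{N}+h$, i.e.\ $p^{N}=0$, a contradiction. Hence no momentum map is orbit-constant and $\psi$ is not Hamiltonian. I expect this final step to be the main obstacle: no single estimate is hard, but it requires combining the orbit criterion, Proposition~\ref{prop:translation}(1), and the upgrade of ``locally constant'' to ``constant on balls of a uniform radius'' on the compact manifold, and then choosing a witness pair that lies in one orbit yet cannot be separated by any uniformly locally constant correction.
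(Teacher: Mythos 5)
Your proposal is correct and follows essentially the same route as the paper: Lemma~\ref{lemma:phi} plus the observation that $\psi(g,\cdot)$ for $g\in p^{-n}\Zp$ fixes the $y$-digits in positions $\ge n$ (together with injectivity of $\phi_n$ on each such slice) drives the group law, freeness, and the orbit description, and the non-Hamiltonian step is the paper's argument in mildly different clothing --- the paper uses local constancy of the correction $h$ at the origin and the orbit points $(0,p^{n-1})=\psi(p^{-n},(0,0))$, while you upgrade to uniform local constancy on the compact space and test the single pair $(0,0)$, $(0,p^N)$; likewise your soft compactness argument for non-properness replaces the paper's explicit computation $\psi(p^{-n},(0,0))=(0,p^{n-1})\to(0,0)$. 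All steps check out.
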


\begin{proof}
First we prove that $\psi$ is a symplectic Lie group action. We see that $\psi$ is smooth: $\{g\}$ as a function of $g$ and $f_n(y)$ as a function of $y$ are locally constant, hence the only local variation in $\psi(g,(x,y))$ occurs in the terms $x$, $\lfloor g\rfloor$ and $y$ of \eqref{eq:psi1} and \eqref{eq:psi2}. This means that $\psi(g,(x,y))$ is smooth and its differential in the variables $x$ and $y$ is the identity, which means that it preserves the symplectic form.

If $g=0$, then $a=0$, $c=r_n(b)$ and $r_n(c)=b$, which implies $\psi(0,(x,y))=(x,y)$. It is left to prove that $\psi(g_1+g_2,(x,y))=\psi(g_1,\psi(g_2,(x,y)))$.

Let $(x,y)\in(\Zp)^2$ and $g_1,g_2\in\Qp$. Let $n\in\N$ such that $g_1,g_2\in p^{-n}\Zp$. By Lemma \ref{lemma:phi}, we have that
\begin{align*}
	\phi_n(\psi(g_1+g_2,(x,y))) & =p^n(g_1+g_2)+\phi_n(x,y) \\
	& =\phi_n(\psi(g_1,\psi(g_2,(x,y)))).
\end{align*}
By the definition of $\phi_n$, this means that $\psi(g_1+g_2,(x,y))$ and $\psi(g_1,\psi(g_2,(x,y)))$ have the same first coordinate and the same rightmost $n$ digits of the second. Moreover, since $g_1,g_2\in p^{-n}\Zp$, applying $\psi$ does not change any digits of the second coordinate at the left of the position $n$. Hence, the two pairs are equal, and we are done.

Now we prove that $\psi$ is free. Let $g\in\Qp$ and $(x,y)\in(\Zp)^2$ such that $\psi(g,(x,y))=(x,y)$. Let $n\in\N$ such that $g\in p^{-n}\Zp$. Lemma \ref{lemma:phi} implies that
\[\phi_n(x,y)=\phi_n(\psi(g,(x,y)))=p^ng+\phi_n(x,y),\]
which implies $g=0$, as we wanted.

$\psi$ is not proper because, for all $n\in\N$,
\[\psi(p^{-n},(0,0))=(0,r_n(1))=(0,p^{n-1}),\]
and the limit of $p^{n-1}$ when $n$ tends to infinity is $0$, while the sequence $p^{-n}$ does not converge.

If two points $(x_1,y_1)$ and $(x_2,y_2)$ are in the same orbit, they are related by the action of $g\in\Qp$. If $g\in p^{-n}\Zp$, only the $n$ rightmost digits can change from $y_1$ to $y_2$. Conversely, if $y_1$ and $y_2$ only differ by the $n$ rightmost digits for some $n\in\N$, we take
\[g=p^{-n}(\phi_n(x_2,y_2)-\phi_n(x_1,y_1)).\]
Then, by Lemma \ref{lemma:phi},
\[\phi_n(\psi(g,(x_1,y_1)))=p^ng+\phi_n(x_1,y_1)=\phi_n(x_2,y_2).\]
This implies that $\psi(g,(x_1,y_1))$ and $(x_2,y_2)$ coincide in the first coordinate and $n$ digits of the second. The rest of digits of the second coordinate of $\psi(g,(x_1,y_1))$ are the same as those of $y_1$, which are those of $y_2$ by hypothesis. Hence $\psi(g,(x_1,y_1))=(x_2,y_2)$, as we wanted.

Since
\[\frac{\dd\lfloor g\rfloor}{\dd g}=1\text{ and }\frac{\dd\{g\}}{\dd g}=0,\]
the action $\psi$ is locally like a translation in the direction $x$, and a momentum map of $\psi$ is $(x,y)\mapsto y$. This is not constant on each orbit. Suppose there is another momentum map $\mu$ which is constant on the orbits. Without loss of generality, we may assume that $\mu(0,0)=0$. For all $n\in\N$, since $\mu$ is constant on the orbits and $\psi(p^{-n},(0,0))=(0,p^{n-1})$, $\mu(0,p^{n-1})=0$. But, by Proposition \ref{prop:translation}(1), there exists a neighborhood $U$ of the origin in $(\Zp)^2$ in which the two momentum maps coincide, that is, $\mu(x,y)=y$ in $U$; in particular, $\mu(0,p^n)=p^n$ for $n$ big enough, which is a contradiction. Hence this action is not Hamiltonian.
\end{proof}

\begin{remark}
	Despite being non-Hamiltonian, the action in Theorem \ref{thm:psi-non-hamiltonian} satisfies that $\{\mu_\xi,\mu_\eta\}=0$, because this equality just says that the orbits are isotropic, which always happens when they have dimension $1$. Hence, this example also serves as a counterexample to the converse of Lemma \ref{lemma:poisson}.
\end{remark}

\section{Final remarks}\label{sec:final}

\begin{remark}\label{rem:previous}
	\letpprime. Let $(M,\omega)$ be a $p$-adic analytic symplectic manifold. The study of such manifolds was initiated by Voevodsky, Warren and the second author in \cite[section 7]{PVW} about ten years ago. Recently we proved \cite[Theorem B]{CrePel-Darboux} a $p$-adic Darboux's Theorem showing that all such manifolds are locally equivalent, as it occurs in the real case. Furthermore, using this result as a stepping stone we derived a symplectic version of Serre's classification of $p$-adic analytic manifolds (provided that these manifolds are second-countable) \cite[Theorem D]{CrePel-Darboux}. In our other papers in the subject \cite{CrePel-JC,CrePel-williamson,CrePel-nonsqueezing,CrePel-angmom} we studied $p$-adic analogs of integrable systems and rigidity/flexibility questions.
\end{remark}

\begin{remark}\label{rem:counterexample-real}
	Theorem \ref{thm:weak} and Theorem \ref{thm:hamiltonian} are both false in real symplectic geometry. Indeed, a proper action with isotropic orbits may not even be weakly Hamiltonian, as shown by the following example: the rotation action of the circle on the torus is symplectic and it does not admit a momentum map, that is, it is not weakly Hamiltonian. It generalizes to higher dimensions as well: if we identify the circle with $\R/\Z$, the action of $(\R/\Z)^n$ on $(\R/\Z)^{2n}$ given by
	\[\psi((a_1,\ldots,a_n),(x_1,y_1,\ldots,x_n,y_n))=(x_1+a_1,y_1,\ldots,x_n+a_n,y_n)\]
	has Lagrangian orbits, but it is not Hamiltonian, because it has no fixed points.
\end{remark}

\begin{remark}\label{rem:proofs}
	The reason why the proof of Theorem \ref{thm:weak} does not work in the real case is because this proof only works when the balls in which the manifold is decomposed are disjoint. This is not possible with real symplectic manifolds because they are not strictly paracompact and cannot, in general, be written as a disjoint union of balls.
	
	The reason why the proof of Theorem \ref{thm:hamiltonian} does not work is more subtle. Even if the balls $B_i$, or the open sets $U_q$ of the covering, were not disjoint, the general scheme of the proof still works. The problem in the real case occurs when trying to extend the locally constant function $\tau:U_q'\to\g^*$ to $U_q$. In order to do that, we need to split $U_q$ into open sets containing the parts of $U_q'$ where $\tau$ is constant, which is possible if and only if there are no two points of $U_q'$ in the same connected component of $U_q$ with a different value of $\tau$. Hence, in the $p$-adic case it is always possible because $U_q$ is totally disconnected, while in general it is not possible in the real case.
\end{remark}

\begin{remark}\label{rem:references}
	We recommend \cite{AloHoh,Eliashberg,MarRat,McDSal,Palmer,Pelayo-hamiltonian,RTVW,Weinstein-symplectic} for further references on (real) symplectic geometry and its connections to integrable systems, topology and group actions. See also \cite{DDK,DjoDra,Dragovich-quantum,Dragovich-harmonic,RTVW,VlaVol} for uses of the $p$-adic numbers in quantum mechanics and \cite{BreFre,CLH,DKKV,DKKVZ,GKPSW} for other uses of the $p$-adic numbers in mathematical physics. For a construction of $p$-adic symplectic vector spaces, see \cite{HuHu,Zelenov}.
\end{remark}

\appendix
\section{$p$-adic numbers and $p$-adic analytic manifolds}\label{sec:appendix}

In this section we explain the concepts we need about $p$-adic geometry. We recommend \cite{Lurie,Schneider,SchWei} for references.

\letpprime. The \emph{$p$-adic absolute value} of an integer $n$ is defined as \[|n|_p=p^{-k},\] where $p^k$ is the highest power of $p$ which divides $n$. This definition is extended to $\Q$ as
\[\left|\frac{m}{n}\right|_p=\frac{|m|_p}{|n|_p}.\]
The field $\Qp$ is the metric completion of $\Q$ with respect to the $p$-adic absolute value.

Given $x\in\Qp$, we can write
\[x=\sum_{i=k}^\infty a_ip^i,\]
where $0\le a_i\le p-1$ and $k\in\Z$ is such that $p^{-k}=|x|_p$. This $k$ is denoted as $\ord_p(x)$.

A \textit{$p$-adic power series} in $(\Qp)^d$ is a series of the form
\[\sum_{I=(i_1,\ldots,i_d)\in \N^d}a_I(x_1-x_{01})^{i_1}\ldots(x_d-x_{0d})^{i_d},\]
where $(x_{01},\ldots,x_{0d})\in(\Qp)^d$ and $a_I\in\Qp$. If $U\subset (\Qp)^{d_1}$ and $V\subset(\Qp)^{d_2}$ are open sets, a function $f:U\to V$ is \textit{$p$-adic analytic} if $U$ can be expressed as
$U=\bigcup_{i\in I}U_i$
where $U_i$ is an open subset of $U$, and there are power series $f_i$ converging in $U_i$ such that $f(x)=f_i(x)$ for every $x\in U_i$.

Let $M$ and $N$ be $p$-adic analytic manifolds of dimensions $d_1$ and $d_2$ respectively. A map $F:M\to N$ is \textit{$p$-adic analytic} if, for any $m\in M$, there exist neighborhoods $U_\phi$ of $m$ and $U_\psi$ of $F(m)$ such that $\psi\circ F\circ\phi^{-1}$ is $p$-adic analytic (this composition is a function from a subset of $(\Qp)^{d_1}$ to a subset of $(\Qp)^{d_2}$).

The concepts of \textit{$p$-adic analytic function, tangent vector, $p$-adic analytic vector field} and \emph{$p$-adic analytic $k$-form} are defined by analogy with the real case. The space of analytic maps $M\to\Qp$ is denoted by $\Omega^0(M)$, and more generally, the space of $k$-forms is denoted by $\omega^k(M)$. The space of tangent vectors to $m$ is denoted by $\mathrm{T}_mM$ and the space of vector fields by $\X(M)$.

The \textit{pullback} $F^*(\alpha)\in\Omega^k(M)$ of a form $\alpha\in\Omega^k(N)$ by $F$, the \textit{push-forward} $F_*(v)\in\mathrm{T}_mN$ of a vector $v\in \mathrm{T}_mM$, and, if $F$ is bi-analytic, the \textit{push-forward} $F_*(X)\in\X(N)$ of a vector field $X\in \X(M)$, are defined analogously to the real case. Similarly for the \textit{wedge operator} and the \textit{differential operator} $\dd$.

\letnpos. \letpprime. As proposed in \cite[Section 7.2]{PVW}, a \textit{$2n$-dimensional $p$-adic analytic symplectic manifold} is a pair $(M,\omega)$, where $M$ is a $2n$-dimensional $p$-adic analytic manifold and $\omega$ is a closed non-degenerate $p$-adic analytic $2$-form. This means that $\dd\omega=0$ and that for all $m\in M$ and $u\in\mathrm{T}_mM,u\ne 0,$ there is $v\in\mathrm{T}_mM$ such that $\omega(u,v)\ne 0$. This form $\omega$ is called a \emph{$p$-adic analytic symplectic form} or simply a \emph{$p$-adic symplectic form.}

A submanifold $N$ of $(M,\omega)$ is \emph{isotropic} if, for all $m\in N$ and $u,v\in\T_m N$, we have that $\omega(u,v)=0$.

Recall the following standard notions. Let $G$ be any group and let $X$ be a topological space. An action $\psi:G\times X\to X$ on $X$ is \emph{proper} if the map $\psi:G\times X\to X$ is proper, that is, if for all convergent sequences $(m_i)_i$ of elements in $X$ and $(g_i)_i$ of elements in $G$ such that $(m_i)_i$ and $(\psi(g_i,m_i))_i$ both converge, the sequence $(g_i)_i$ contains a convergent subsequence. This always happens if $G$ is compact. The \emph{stabilizer} of $m\in X$ is the subgroup $\{g\in G:\psi(g,m)=m\}$ and its \emph{orbit} is the set $\psi(G,m)=\{\psi(g,m):g\in G\}$.

\letpprime. Let $G$ be a $p$-adic analytic Lie group and $M$ a $p$-adic analytic manifold. Two actions $\psi:G\times M\to M$ and $\psi':G'\times M'\to M'$ are \emph{weakly isomorphic} if there exists a group isomorphism $\phi:G\to G'$ and a $p$-adic analytic diffeomorphism $F:M\to M'$ such that $F(\psi(g,m))=\psi'(\phi(g),F(m))$ for every $g\in G$ and $m\in M$. If $G=G'$, the actions are \emph{isomorphic} if they are weakly isomorphic and $\phi$ is the identity.

\end{document}